\def\0{\global\ite=1\1}
\def\1{\item{\rm(\romannumeral\the\ite)}\advance\ite1\quad}
\font\teneufm=eufm10 scaled \magstep1
\font\seveneufm=eufm7 scaled \magstep1
\font\fiveeufm=eufm5  scaled \magstep1
\font\tenmsb=msbm10 scaled \magstep1  \textfont\msbfam=\tenmsb
\font\sevenmsb=msbm7 scaled \magstep1 \scriptfont\msbfam=\sevenmsb
\font\fivemsb=msbm5 scaled \magstep1  \scriptscriptfont\msbfam=\fivemsb
\def\dd#1{\raise1.5pt\hbox{$\,\partial\!$}/\raise-2.5pt\hbox{$\!\partial#1\,$}}
\def\tilde{\widetilde}
\def\hat{\widehat}
\def\5#1{{\mathcal #1}}
    \newcommand\cQ{\mathcal{Q}} 
\def\fm{{\mathfrak m}}
\def\CC{{\mathbb C}}
\def\NN{{\mathbb N}}
\def\PP{{\mathbb P}}
\def\XX{{\mathbb X}}
\def\ra{\rightarrow}
\def\hat{\widehat}
\def\reg{\mathop{\rm reg}\nolimits}
\def\GL{\mathop{\rm GL}\nolimits}
\def\SL{\mathop{\rm SL}\nolimits}
\def\Ann{\mathop{\rm Ann}\nolimits}
\def\s{\mathop{\rm s}\nolimits}
\def\ss{\mathop{\rm ss}\nolimits}
\def\Soc{\mathop{\rm Soc}\nolimits}
\def\Gr{\mathop{\rm Gr}\nolimits}
\def\Sym{\mathop{\rm Sym}\nolimits}
\def\gitq{/\hspace{-0.1cm}/}
\newcommand\co{\colon} 
 \def\HollowBoxx #1#2#3{{\dimen0=#1 \advance\dimen0 by -#2
       \dimen1=#1 \advance\dimen1 by #3
        \vrule height 0pt depth #3 width #2
       \hskip -#3
       \vrule height #1 depth #3 width #3}}
 \def\LeftContraction{\mathord{\kern1.45pt \HollowBoxx{6pt}{3.5pt}{.4pt}}\,}
 \def\HollowBox #1#2#3{{\dimen0=#1 \advance\dimen0 by -#3
       \dimen1=#1 \advance\dimen1 by #3
        \vrule height #1 depth #3 width #3
        \vrule height 0pt depth #3 width #2
        \hskip -#3}}
 \def\RightContraction{\mathord{\, \HollowBox{6pt}{3.1pt}{.4pt}} \kern1.6pt}
\newtheorem{theorem}{THEOREM}[section]
\newtheorem{proposition}[theorem]{Proposition}
\newtheorem{conjecture}[theorem]{Conjecture}
\theoremstyle{definition}
\newtheorem{lemma}[theorem]{Lemma}
\theoremstyle{remark}
\newtheorem{remark}[theorem]{Remark}
\def\blfootnote{\xdef\@thefnmark{}\@footnotetext}
\begin{document}

\title[Associated forms of binary quartics and ternary cubics]{Associated forms of binary quartics
\vspace{0.1cm}\\ 
and ternary cubics}\blfootnote{{\bf Mathematics Subject Classification:} 14L24, 14H52, 14N05, 13A50, 32S25}\blfootnote{{\bf Keywords:} classical invariant theory, geometric invariant theory, elliptic curves, projective duality, isolated hypersurface singularities.}
\author[Alper]{J. Alper}
\author[Isaev]{A. V. Isaev}
\author[Kruzhilin]{N. G. Kruzhilin}

\address[Alper]{Mathematical Sciences Institute\\
Australian National University\\
Canberra, ACT 0200, Australia}
\email{jarod.alper@anu.edu.au}

\address[Isaev]{Mathematical Sciences Institute\\
Australian National University\\
Canberra, ACT 0200, Australia}
\email{alexander.isaev@anu.edu.au}

\address[Kruzhilin]{Department of Complex Analysis\\
Steklov Mathematical Institute\\
8 Gubkina St., Moscow GSP-1 119991, Russia}
\email{kruzhil@mi.ras.ru}

\maketitle

\thispagestyle{empty}

\pagestyle{myheadings}

\begin{abstract}

Let $\cQ_n^d$ be the vector space of forms of degree\linebreak $d\ge 3$ on $\CC^n$, with $n\ge 2$. The object of our study is the map $\Phi$, introduced in \cite{EI}, \cite{AI1}, that assigns every nondegenerate form in $\cQ_n^d$ the so-called associated form, which is an element of $\cQ_n^{n(d-2)*}$. We focus on two cases: those of binary quartics ($n=2$, $d=4$) and ternary cubics ($n=3$, $d=3$). 
In these situations the map $\Phi$ induces a rational equivariant involution on the projectivized space $\PP(\cQ_n^d)$, which is in fact the only nontrivial rational equivariant involution on $\PP(\cQ_n^d)$. In particular, there exists an equivariant involution on the space of elliptic curves with nonvanishing $j$-invariant. In the present paper, we give a simple interpretation of this involution in terms of projective duality. Furthermore, we express it via classical contravariants.
\end{abstract}

\section{Introduction}\label{intro}
\setcounter{equation}{0}

In this paper we continue to explore new ideas in classical invariant theory that were proposed in the recent article \cite{EI} and further developed in \cite{AI1}, \cite{AI2}. Let ${\mathcal Q}_n^d := \Sym^d (\CC^{n*})$ be the vector space of forms of degree $d$ on $\CC^n$, where $n\ge 2$, $d\ge 3$. Assuming that the discriminant of $f\in{\mathcal Q}_n^d$ does not vanish, define $M_f:=\CC[z_1,\dots,z_n]/(f_{z_1},\dots,f_{z_n})$ to be the Milnor algebra of the isolated hypersurface singularity at the origin of the zero set of $f$. Let $\fm$ be the maximal ideal of $M_f$. One can then introduce a form defined on the $n$-dimensional quotient $\fm/\fm^2$ with values in the one-dimensional socle $\Soc(M_f)$ of $M_f$ as follows:
$$
\begin{aligned}
\fm/\fm^2 	& \to \Soc(M_f), \\
x & \mapsto y^{\,n(d-2)},
\end{aligned}
$$
where $y$ is any element of ${\mathfrak m}$ that projects to $x\in{\mathfrak m}/{\mathfrak m}^2$.  There is a canonical isomorphism ${\mathfrak m}/{\mathfrak m}^2\cong \CC^{n*}$ and, since the Hessian of $f$ generates the socle, there is also a canonical isomorphism $\Soc(M_f) \cong \CC$. Hence, one obtains a form ${\mathbf f}$ of degree $n(d-2)$ on  $\CC^{n*}$ (i.e.~an element of $\Sym^{n(d-2)}(\CC^n)$), which is called the {\it associated form}\, of $f$ (see Section \ref{setup} for more detail on this definition).

It is a consequence of Corollary 3.3 in \cite{AI1} that, upon identification of $\CC^{n*}$ with $\CC^n$, the associated form of $f$ is a Macaulay inverse system of the Milnor algebra $M_f$. Furthermore, if we identify the space $\Sym^{n(d-2)} (\CC^n)$ with $(\Sym^{n(d-2)} (\CC^{n*}))^*=\cQ_n^{n(d-2)*}$ by means of the polar pairing, then the form ${\mathbf f}$ coincides, up to scale, with the element of ${\mathcal Q}_n^{n(d-2)*}$ given by
$$
\begin{aligned}
{\mathcal Q}_n^{n(d-2)}  	& \to \CC, \\
g					& \mapsto \frac{1}{(2\pi i)^n}\int\limits_{|f_{z_{{}_1}}|=\varepsilon_1,\dots,|f_{z_{{}_n}}|=\varepsilon_n}\frac{g\, dz_1\wedge\cdots\wedge dz_n}{f_{z_1}\cdots f_{z_n}}.
\end{aligned}
$$
For a discussion of these equivalent ways to describe the associated form we refer the reader to \cite{AI2}.

The principal object of our study is the morphism
$$
\Phi:X_n^d\to \cQ_n^{n(d-2)*},\quad f\mapsto{\mathbf f}
$$
of affine algebraic varieties, where $X_n^d$ is the variety of forms in $\cQ_n^d$ with nonzero discriminant. This map has a $\GL_n$-equivariance property, and one of the reasons for our interest in $\Phi$ is the following intriguing conjecture proposed in \cite{AI1} (see also \cite{EI}):

\begin{conjecture}\label{conj2} For every regular $\GL_n$-invariant function $S$ on $X_n^d$ there exists a rational $\GL_n$-invariant function $R$ on $\cQ_n^{n(d-2)*}$ defined at all points of the set\, $\Phi(X_n^d)\subset {\mathcal Q}_n^{n(d-2)*}$ such that $R\circ\Phi=S$.
\end{conjecture}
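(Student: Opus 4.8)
The plan is to reduce Conjecture \ref{conj2} to a birational statement about the $\GL_n$-action, and then to confront a regularity question on the image of $\Phi$. Write $Y:=\overline{\Phi(X_n^d)}\subseteq\cQ_n^{n(d-2)*}$ for the closure of the image. The asserted $\GL_n$-equivariance of $\Phi$ holds up to a character of $\GL_n$, namely a power of $\det$; since the scalar matrices act on $\cQ_n^{n(d-2)*}=\Sym^{n(d-2)}(\CC^n)$ by homotheties through the surjective power $\lambda\mapsto\lambda^{n(d-2)}$, every $\GL_n$-invariant function on the target is automatically homothety-invariant, so the character twist is immaterial and pullback $\Phi^*\colon\CC(Y)^{\GL_n}\to\CC(X_n^d)^{\GL_n}$ carries invariants to invariants. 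By Rosenlicht's theorem these invariant function fields are the function fields of the rational quotients $X_n^d/\GL_n$ and $Y/\GL_n$. A regular invariant $S$ lies in $\CC(X_n^d)^{\GL_n}$, so it suffices to prove two things: first, that $\Phi^*$ is \emph{surjective}, whence $S=R\circ\Phi$ for some $R\in\CC(Y)^{\GL_n}$; and second, that this $R$ is \emph{regular at every point of} $\Phi(X_n^d)$.

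For surjectivity of $\Phi^*$ I would show that the induced dominant rational map $\overline\Phi\colon X_n^d/\GL_n\dashrightarrow Y/\GL_n$ is birational; in characteristic zero it is enough that $\Phi$ \emph{separates $\GL_n$-orbits generically}. This is exactly where the geometry of the associated form enters. By the discussion preceding the conjecture and Corollary~3.3 of \cite{AI1}, $\Phi(f)=\mathbf f$ is a Macaulay inverse system of the Milnor algebra $M_f$, and the passage between an Artinian Gorenstein graded algebra and its inverse system is $\GL_n$-equivariant; hence the $\GL_n$-orbit of $\mathbf f$ determines, and is determined by, the isomorphism class of $M_f$ as a \emph{graded} algebra. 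The decisive algebraic input is then a reconstruction principle of Mather--Yau type for homogeneous isolated singularities: for forms with nonvanishing discriminant, a graded isomorphism $M_{f_1}\cong M_{f_2}$ forces $f_1$ and $f_2$ to be $\GL_n$-equivalent. Granting this, $\Phi(f_1)$ and $\Phi(f_2)$ lie in one $\GL_n$-orbit precisely when $f_1,f_2$ do, so $\overline\Phi$ is generically injective, hence birational, and $\Phi^*$ is an isomorphism of invariant function fields. In particular $S=R\circ\Phi$ for a \emph{unique} $\GL_n$-invariant rational function $R$ on $Y$.

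It remains to upgrade rationality of $R$ to regularity at every point of $\Phi(X_n^d)$, and this is the step I expect to be the main obstacle. Because $\overline\Phi$ identifies $X_n^d/\GL_n$ and $Y/\GL_n$ birationally and $S$ is regular, $R$ is regular on a dense $\GL_n$-stable open subset of $Y$; a codimension-one pole of $R$ meeting this locus would, under the generically finite map $\overline\Phi$, pull back to a pole of the regular function $S$, a contradiction, so $R$ has no codimension-one poles along the generic part of $\Phi(X_n^d)$. On the normal locus of $Y$ this already yields regularity, since a rational function regular in codimension one on a normal variety is everywhere regular. The genuine difficulty is concentrated at the remaining points—the non-normal points of $Y$ lying on $\Phi(X_n^d)$ and the points over which $\overline\Phi$ fails to be a local isomorphism—where one must control $R$ directly. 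The most promising route is to exhibit $\Phi\colon X_n^d\to\Phi(X_n^d)$ as a geometric quotient onto a locally closed subvariety, so that the fibers are exactly the $\GL_n$-orbits and the codimension-one argument propagates pointwise. Carrying this out for \emph{arbitrary} $(n,d)$ demands a precise description of the image variety $\overline{\Phi(X_n^d)}$ and of the fibers of $\Phi$, and it is here that the explicit integral and contravariant descriptions of $\mathbf f$ recalled above, rather than purely formal arguments, are likely to be indispensable; in the cases $(n,d)=(2,4)$ and $(3,3)$ these descriptions are fully tractable, which is what makes those instances accessible by the same plan.
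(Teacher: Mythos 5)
There is no proof in the paper to compare against: the statement is Conjecture \ref{conj2}, which the paper explicitly leaves open, citing only partial results --- \cite{EI} for binary forms of degrees $3\le d\le 6$, \cite{AI2} for binary forms of arbitrary degree, and \cite{AI1} for a weaker variant valid for all $n,d$ in which the invariant $R$ is merely rational and need not be defined at all points of $\Phi(X_n^d)$. Read against that background, your proposal essentially reconstructs the known weak variant: the observation that the $(\det C)^2$-twist in Proposition \ref{equivariance} is immaterial because $\GL_n$-invariant functions on $\cQ_n^{n(d-2)*}$ are automatically homothety-invariant is correct; Rosenlicht correctly reduces surjectivity of $\Phi^*$ on invariant function fields to generic orbit separation; and orbit separation does follow from the inverse-system interpretation (Corollary 3.3 of \cite{AI1}) combined with the graded Mather--Yau/Shoshita\u\i shvili reconstruction principle \cite{MY}, \cite{Sh}. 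Up to that point your reduction is sound and is in substance the route of \cite{AI1}.

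The genuine gap is the one you flag yourself: upgrading $R$ from rational to regular at \emph{every} point of $\Phi(X_n^d)$, which is exactly the content of the conjecture beyond the weak variant, and your argument there does not close. The codimension-one pole argument only rules out polar divisors of $R$ through \emph{generic} points of $\Phi(X_n^d)$: if a component $D$ of the polar divisor (which is $\GL_n$-stable, since $R$ is invariant) meets $\Phi(X_n^d)$ only in a proper closed subset, then $\Phi^{-1}(D)$ may have codimension $\ge 2$ in $X_n^d$, and the regularity of $S$ yields no contradiction; likewise nothing is established at non-normal points of $\overline{\Phi(X_n^d)}$, where regularity in codimension one does not imply regularity. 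The proposed remedy --- exhibiting $\Phi$ as a geometric quotient onto a locally closed image --- is stated as a hope rather than proved, and even granted it would not by itself control denominators at the special points where the geometry degenerates; the behaviour in Subsection \ref{S:binary-quartics}, where the orbit $O_3$ is collapsed onto the lower-dimensional orbit $O_1$ and stabilizer dimensions jump, is exactly the kind of phenomenon such an argument must confront on the boundary. So what you have is a correct reduction to, and an accurate restatement of, the open problem, not a proof; consistently with your closing remark, the one case where the conjecture has actually been settled beyond small degrees ($n=2$, in \cite{AI2}) required a concrete analysis of the image of $\Phi$ and its invariants rather than the formal argument sketched here.
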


If confirmed, the conjecture would imply that the invariant theory of forms in ${\mathcal Q}_n^d$ can be extracted, by way of the morphism $\Phi$, from that of forms in ${\mathcal Q}_n^{n(d-2)*}$ at least at the level of rational invariant functions, or absolute invariants. In \cite{EI}, Conjecture \ref{conj2} was shown to hold for binary forms (i.e.~for $n=2$) of degrees $3\le d\le 6$, and in \cite{AI1} its weaker variant was established for arbitrary $n$ and $d$. Furthermore, in \cite{AI2} the conjecture was confirmed for binary forms of any degree. While Conjecture \ref{conj2} is rather interesting from the purely invariant-theoretic viewpoint, it has an important implication for singularity theory. Namely, as explained in detail in \cite{AI1}, \cite{AI2}, if this conjecture is established, it will provide a solution, in the homogeneous case, to the so-called {\it reconstruction problem}, which is the question of finding a constructive proof of the well-known Mather-Yau theorem (see \cite{MY}, \cite{Sh}). Settling Conjecture \ref{conj2} is part of our program to solve the reconstruction problem for quasihomogeneous isolated hypersurface singularities. This amounts to showing that a certain system of invariants introduced in \cite{EI} is complete, and Conjecture \ref{conj2} implies completeness in the homogeneous case.

The morphism $\Phi$ is rather natural and deserves attention regardless of Conjecture \ref{conj2}. In fact, this map is interesting even for small values of $n$ and $d$. In the present paper, we study $\Phi$ in two situations: those of binary quartics ($n=2$, $d=4$) and ternary cubics ($n=3$, $d=3$). These are the only choices of $n$, $d$ for which $\Phi$ preserves the form's degree. Curiously, as we will see in Section \ref{result}, in each of the two cases the projectivization ${\mathbb \Phi}$ of $\Phi$ induces an equivariant involution on the image $\XX_n^d$ of $X_n^d$ in the projective space $\PP(\cQ_n^d)$, with one $\SL_n(\CC)$-orbit removed. Furthermore, as we show in Theorem \ref{invclass}, a nontrivial rational equivariant involution on $\PP(\cQ_2^4)$ and $\PP(\cQ_3^3)$ is unique. In particular, ${\mathbb \Phi}$ yields a unique equivariant involution on the space of elliptic curves with nonvanishing $j$-invariant, which appears to have never been mentioned in the extensive literature on elliptic curves. Early observations in this direction go back to article \cite{Ea} published some 10 years ago but so far the involution has not been understood in more explicit terms. 

The main goals of the present paper are twofold. Firstly, for binary quartics and ternary cubics we describe the equivariant involution via projective duality. Namely, in Section \ref{result} we prove that for $f \in \XX_n^d$ the element ${\mathbb \Phi}(f) \in \PP(\cQ_n^{d*})=\PP(\cQ_n^d)^*$ is identified with the tangent space of the $\GL_n(\CC)$-orbit of $\hat f$ at $\hat f$, where $\hat f$ is any lift of $f$ to $\cQ_n^d$ (see Theorem \ref{main}). Secondly, in Section \ref{S:contravariant} we consider the contravariant defined by $\Phi$ and relate it to classical contravariants due to Cayley and Sylvester, which gives yet another interpretation of the equivariant involution induced by ${\mathbb \Phi}$. This section is written in the spirit of mid-19th century invariant theory with focus on explicit formulas and identities. 

{\bf Acknowledgements.}  This work is supported by the Australian Research Council. It was initiated during the third author's visit to the Australian National University, and significant progress was made during the second author's stay at the Max Planck Institute for Mathematics in Bonn in 2014.

\section{Preliminaries}\label{setup}
\setcounter{equation}{0}
 
Let ${\mathcal Q}_n^d$ be the vector space of forms of degree $d$ on $\CC^n$ where $n\ge 2$. Its dimension is given by the well-known formula
\begin{equation}
\dim_{\CC}{\mathcal Q}_n^d=\left(
\begin{array}{c}
d+n-1\\
d
\end{array}
\right).\label{dimform}
\end{equation}
The standard action of $\GL_n=\GL_n(\CC)$ on $\CC^n$ induces an action on ${\mathcal Q}_n^d$ as follows:
$$
(C\cdot f)(z):=f\left(C^{-1}\cdot z\right)
$$
for $C\in\GL_n$, $f\in{\mathcal Q}_n^d$ and $z=(z_1,\dots,z_n)\in\CC^n$. Two forms that lie in the same $\GL_n$-orbit are called linearly equivalent. Below we will be mostly concerned with the induced action of $\SL_n=\SL_n(\CC)$.  

To every nonzero $f\in{\mathcal Q}_n^d$ we associate the hypersurface
$$
V_f:=\{z\in\CC^n:f(z)=0\}
$$
and consider it as a complex space with the structure sheaf induced by $f$. The singular set of $V_f$ is then the critical set of $f$. In particular, if $d\ge 2$ the hypersurface $V_f$ has a singularity at the origin. We are interested in the situation when this singularity is isolated, or, equivalently, when $V_f$ is smooth away from 0. This occurs if and only if $f$ is nondegenerate, i.e.~$\Delta(f)\ne 0$, where $\Delta$ is the discriminant (see Chapter 13 in \cite{GKZ}). 

For $d\ge 3$ define
$$
X^d_n:=\{f\in{\mathcal Q}_n^d:\Delta(f)\ne 0\}.
$$
Observe that $\GL_n$ acts on the affine variety $X_n^d$ and note that every $f\in X_n^d$ is stable with respect to this action, i.e.~the orbit of $f$ is closed in $X_n^d$ and has dimension $n^2$  (see, e.g.,~Corollary 5.24 in \cite{Mu}).

Fix $f\in X^d_n$ and consider the Milnor algebra of the singularity\ of $V_f$, which is the complex local algebra
$$
M_f:=\CC[[z_1,\dots,z_n]]/(f_1,\dots,f_n),
$$
where $\CC[[z_1,\dots,z_n]]$ is the algebra of formal power series in $z_1,\dots,z_n$ with complex coefficients and $f_j:=\partial f/\partial z_j$, $j=1,\dots,n$. Since the singularity of $V_f$ is isolated, the algebra $M_f$ is Artinian, i.e.\linebreak $\dim_{\CC}M_f<\infty$ (see Proposition 1.70 in \cite{GLS}). Therefore, $f_1,\dots,f_n$ is a system of parameters in $\CC[[z_1,\dots,z_n]]$. Since $\CC[[z_1,\dots,z_n]]$ is a regular local ring, $f_1,\dots,f_n$ is a regular sequence in $\CC[[z_1,\dots,z_n]]$. This yields that $M_f$ is a complete intersection. 

It is convenient to utilize another realization of the Milnor algebra. Namely, we can write
$$
M_f=\CC[z_1,\dots,z_n]/(f_1,\dots,f_n).
$$
Let ${\mathfrak m}$ denote the maximal ideal of $M_f$, which consists of all elements represented by polynomials in $\CC[z_1,\dots,z_n]$ vanishing at the origin. The maximal ideal is nilpotent and we let $\nu:=\max\{\eta\in\NN\mid {\mathfrak m}^{\eta}\ne 0\}$ be the socle degree of $M_f$.

Since $M_f$ is a complete intersection, by \cite{B} it is a Gorenstein algebra. This means that the socle of $M_f$, defined as
$$
\Soc(M_f):=\{x\in{\mathfrak m}: x\,{\mathfrak m}=0\},
$$
is a one-dimensional vector space over $\CC$  (see, e.g.,~Theorem 5.3 in \cite{Hu}). We then have $\Soc(M_f)={\mathfrak m}^{\nu}$. Furthermore, $\Soc(M_f)$ is spanned by the element of $M_f$ represented by the Hessian $H(f)$ of $f$ (see, e.g.,~Lemma 3.3 in \cite{Sa}). Since $H(f)$ is a form in ${\mathcal Q}_n^{n(d-2)}$, it follows that $\nu=n(d-2)$ (see \cite{AI1}, \cite{AI2} for details). Thus, the subspace 
\begin{equation}
W_f:={\mathcal Q}_n^{n(d-2)-d+1}f_1+\dots+{\mathcal Q}_n^{n(d-2)-d+1}f_n\subset{\mathcal Q}_n^{n(d-2)}\label{subspace}
\end{equation}
has codimension 1, with the line spanned by $H(f)$ being complementary to it.

Let $e_1^*,\dots,e_n^*$ be the basis in $\CC^{n*}$ dual to the standard basis in $\CC^n$ and $z_1^*,\dots,z_n^*$ the coordinates of a vector $z^*\in\CC^{n*}$ (we slightly abuse notation by writing $z^*=(z_1^*,\dots,z_n^*)$). Denote by $\omega \co \Soc(M_f)\ra\CC$ the linear isomorphism given by the condition $\omega(H(f))=1$ (with $H(f)$ viewed as an element of $M_f$). Define a form ${\mathbf f}$ of degree $n(d-2)$ on $\CC^{n*}$ (i.e.~an element of $\Sym^{n(d-2)} (\CC^n)$) by the formula
$$
{\mathbf f}(z^*):=\omega\left((z_1^*{\mathbf z}_1+\dots+z_n^*{\mathbf z}_n)^{n(d-2)}\right),\label{assocformdef}
$$
where ${\mathbf z}_j$ is the element of the algebra $M_f$ represented by the coordinate function $z_j\in\CC[z_1,\dots,z_n]$. We call ${\mathbf f}$ the {\it associated form}\, of $f$. 

The associated form arises from the following map:
\begin{equation}
\begin{array}{rll}
\fm/\fm^2&\to&\Soc(M_f),\\
\vspace{-0.3cm}\\
x&\mapsto& y^{n(d-2)},
\end{array}\label{coordinatefree}
\end{equation}
with $y\in\fm$ being any element that projects to $x\in\fm/\fm^2$. Indeed, ${\mathbf f}$ is derived from this map by identifying the target with $\CC$ via $\omega$ and the source with $\CC^{n*}$ by mapping the image of ${\mathbf z}_j$ in $\fm/\fm^2$ to $e_j^*$, $j=1,\dots,n$.

To obtain an expanded expression for ${\mathbf f}$, notice that if  $i_1,\dots,i_n$ are nonnegative integers such that $i_1+\dots+i_n=n(d-2)$, the product ${\mathbf z}_1^{i_1}\cdots {\mathbf z}_n^{i_n}$ lies in $\Soc(M_f)$, hence we have 
\begin{equation}
{\mathbf z}_1^{i_1}\cdots {\mathbf z}_n^{i_n}=\mu_{i_1,\dots,i_n}(f) H(f)\label{assocformexpppp}
\end{equation}
for some $\mu_{i_1,\dots,i_n}(f)\in\CC$. In terms of the coefficients $\mu_{i_1,\dots,i_n}(f)$ the form ${\mathbf f}$ is written as 
\begin{equation}
{\mathbf f}(z^*)=\sum_{i_1+\cdots+i_n=n(d-2)}\frac{(n(d-2))!}{i_1!\cdots i_n!}\mu_{i_1,\dots,i_n}(f)
z_1^{* i_1}\cdots z_n^{* i_n}.\label{assocformexpp}
\end{equation}
Notice that each $\mu_{i_1,\dots,i_n}$ is a regular function on $X_n^d$, therefore 
\begin{equation}
\mu_{i_1,\dots,i_n}=\frac{P_{i_1,\dots,i_n}}{\Delta^{p_{i_1,\dots,i_n}}},\label{formulaformus}
\end{equation}
for some $P_{i_1,\dots,i_n}\in\CC[\cQ_n^d]$ and nonnegative integer $p_{i_1,\dots,i_n}$.

Recall that the polar pairing yields a canonical identification between the spaces $\Sym^{n(d-2)} (\CC^n)$ and $(\Sym^{n(d-2)} (\CC^{n*}))^*={\mathcal Q}_n^{n(d-2)*}$. Using this identification, we can regard the associated form as an element of ${\mathcal Q}_n^{n(d-2)*}$ and consider the morphism
$$
\Phi \co X_n^d\ra {\mathcal Q}_n^{n(d-2)*} ,\quad f\mapsto {\mathbf f}
$$
of affine varieties. This map is rather natural; in particular, Proposition 2.1 in \cite{AI1} implies an equivariance property for $\Phi$. Namely, introducing an action of $\GL_n$ on the dual space ${\mathcal Q}_n^{n(d-2)*}$ in the usual way as
$$
(C\cdot g)(h):=g(C^{-1}\cdot h),\quad g\in \cQ_n^{n(d-2)*},\, h\in {\mathcal Q}_n^{n(d-2)},\,C\in\GL_n,
$$
we have:
  
\begin{proposition}\label{equivariance} For every $f\in X_n^d$ and $C\in\GL_n$ the following holds:
$$
\Phi(C\cdot f)=(\det C)^2\,\Bigl(C\cdot\Phi(f)\Bigr).
$$
In particular, the morphism $\Phi$ is $\SL_n$-equivariant. 
\end{proposition}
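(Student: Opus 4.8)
The plan is to trace how every ingredient in the definition of the associated form transforms under the substitution $z\mapsto C^{-1}z$ and to isolate the origin of the scalar $(\det C)^2$. Write $g:=C\cdot f$, so $g(z)=f(C^{-1}z)$, and let $\phi_C$ be the automorphism of $\CC[z_1,\dots,z_n]$ given by $\phi_C(h)(z)=h(C^{-1}z)$, so that $\phi_C(f)=g$. By the chain rule $\partial_j(\phi_C h)=\sum_k (C^{-1})_{kj}\,\phi_C(\partial_k h)$; applied to $h=f$ this expresses each $g_j$ as an invertible $\CC$-linear combination of the $\phi_C(f_k)$, so $\phi_C$ carries the Jacobian ideal $(f_1,\dots,f_n)$ onto $(g_1,\dots,g_n)$ and descends to an algebra isomorphism $\bar\phi_C\co M_f\iso M_g$. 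On the linear generators it sends $\mathbf z_j$ (the class of $z_j$ in $M_f$) to $\sum_k (C^{-1})_{jk}\mathbf z_k$ in $M_g$.

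First I would compute the effect on the socle. Differentiating twice by the chain rule gives $\Hess(g)=(C^{-1})^{\top}\,\phi_C(\Hess f)\,C^{-1}$ entrywise, whence $H(g)=(\det C)^{-2}\,\phi_C(H(f))$; that is, $\bar\phi_C(H(f))=(\det C)^2\,H(g)$ in $M_g$. This single identity is where the factor $(\det C)^2$ enters. Since the normalizing functionals are fixed by $\omega_f(H(f))=\omega_g(H(g))=1$ and the socles are one-dimensional, it follows that $\omega_g\circ\bar\phi_C=(\det C)^2\,\omega_f$ on $\Soc(M_f)$.

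Next I would combine these in the definition $\mathbf f(z^*)=\omega_f\bigl((z_1^*\mathbf z_1+\dots+z_n^*\mathbf z_n)^{n(d-2)}\bigr)$. Applying $\bar\phi_C$ inside $\omega_g$ and using the two transformation rules above, the power $(\sum_j z_j^*\mathbf z_j)^{n(d-2)}$ maps to $(\sum_k u_k\mathbf z_k)^{n(d-2)}$ with $u=(C^{\top})^{-1}z^*$, and the scalars multiply out to give $\mathbf g\bigl((C^{\top})^{-1}z^*\bigr)=(\det C)^2\,\mathbf f(z^*)$, equivalently $\mathbf g(z^*)=(\det C)^2\,\mathbf f(C^{\top}z^*)$. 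The $\SL_n$-statement is then immediate, as $\det C=1$ there.

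The hard part will be the final reinterpretation of this transformation law as the stated identity $\Phi(C\cdot f)=(\det C)^2\,(C\cdot\Phi(f))$ in $\cQ_n^{n(d-2)*}$, which is pure bookkeeping of dualities. It requires matching two conventions: that the contragredient action of $C$ on $\CC^{n*}$ is $(C^{\top})^{-1}$, so that the natural $\GL_n$-action on $\Sym^{n(d-2)}(\CC^n)$, viewed as forms on $\CC^{n*}$, is $(C\cdot\mathbf f)(z^*)=\mathbf f(C^{\top}z^*)$; and that the polar pairing identification $\Sym^{n(d-2)}(\CC^n)\cong\cQ_n^{n(d-2)*}$ is $\GL_n$-equivariant, so this action corresponds to the dual action $(C\cdot g)(h)=g(C^{-1}\cdot h)$ declared on $\cQ_n^{n(d-2)*}$. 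Granting the invariance of the polar pairing, the law $\mathbf g(z^*)=(\det C)^2\,\mathbf f(C^{\top}z^*)$ is exactly $\Phi(g)=(\det C)^2(C\cdot\Phi(f))$, completing the argument. Alternatively, one can sidestep this bookkeeping entirely by invoking the equivariance of the coordinate-free construction \eqref{coordinatefree} recorded in Proposition 2.1 of \cite{AI1}.
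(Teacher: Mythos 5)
Your proof is correct. Note, however, that the paper itself does not prove this proposition at all: it simply records it as a consequence of Proposition 2.1 in \cite{AI1} (the sentence preceding the statement reads ``Proposition 2.1 in \cite{AI1} implies an equivariance property for $\Phi$''), which is precisely the shortcut you mention in your final sentence. What you supply instead is a complete, self-contained verification, and all the steps check out: the chain rule correctly shows that $\phi_C$ carries the Jacobian ideal of $f$ onto that of $C\cdot f$, giving the isomorphism $\bar\phi_C\co M_f\iso M_{C\cdot f}$; the identity $H(C\cdot f)=(\det C)^{-2}\phi_C(H(f))$ is the right (and only) source of the factor $(\det C)^2$, since the socle functionals are normalized against the respective Hessians; and the final bookkeeping -- the contragredient action on $\CC^{n*}$ and the $\GL_n$-invariance of the polar pairing, which makes the identification $\Sym^{n(d-2)}(\CC^n)\cong\cQ_n^{n(d-2)*}$ equivariant for the dual action $(C\cdot g)(h)=g(C^{-1}\cdot h)$ declared in the paper -- is handled with the correct conventions, and your transformation law $\mathbf{g}(z^*)=(\det C)^2\,\mathbf{f}(C^{\top}z^*)$ is also consistent with the alternative convention of Remark \ref{olddef}, equation (\ref{equivarianceee}). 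The trade-off between the two routes is the usual one: the paper's citation is economical but opaque about where the determinant squared comes from, while your argument makes that mechanism explicit (it is exactly the weight of the Hessian covariant) at the cost of a page of careful duality bookkeeping, which is in fact the only delicate part of the exercise.
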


\begin{remark}\label{olddef} In \cite{Ea}, \cite{AI1}, \cite{AI2} the associated form was defined as the element of $\cQ_n^{n(d-2)}=\Sym^{n(d-2)} (\CC^{n*})$ obtained from map (\ref{coordinatefree}) by identifying the quotient $\fm/\fm^2$ with $\CC^n$ rather than $\CC^{n*}$. Accordingly, the morphism $\Phi$ was introduced as a map from $X_n^d$ to $\cQ_n^{n(d-2)}$. The morphism so defined has the following equivariance property:
\begin{equation}
\Phi(C\cdot f)=(\det C)^2\,\Bigl((C^{-1})^T\cdot\Phi(f)\Bigr),\quad f\in X_n^d,\, C\in\GL_n.\label{equivarianceee}
\end{equation}
Below it will be sometimes convenient to view associated forms and the morphism $\Phi$ in this way. 
\end{remark}

The present paper mainly concerns two situations: the case of binary quartics and ternary cubics. In the next section, we will give a geometric description of the morphism $\Phi$ in terms of projective duality and in Section \ref{S:contravariant} an algebraic interpretation of $\Phi$ in terms of classical contravariants.

\section{Duality for binary quartics and ternary cubics}\label{result}
\setcounter{equation}{0}

We will now projectivize the setup of Section \ref{setup} and replace the action of $\GL_n$ with that of $\SL_n$. Namely, let $\PP(\cQ_n^d)$ be the projectivization of $\cQ_n^d$, i.e.~$\PP(\cQ_n^d): = (\cQ_n^d \setminus \{0\}) / \CC^{\times}$. In what follows we often write elements of $\PP(\cQ_n^d)$ as forms meaning that they are considered up to scale. The action of $\SL_n$ on $\cQ_n^d$ induces an $\SL_n$-action on $\PP(\cQ_n^d)$, and for $f\in \PP(\cQ_n^d)$ we denote its orbit $\SL_n\cdot f$ by $O(f)$. Further, define $\XX_n^d \subset \PP(\cQ_n^d)$ to be the image of $X_n^d$ under the quotient morphism $\cQ_n^d \setminus \{0\} \to \PP(\cQ_n^d)$. Clearly, for $f\in\XX_n^d$ the orbit $O(f)$ is closed in $\XX_n^d$ and has dimension $n^2-1$. Similarly, we projectivize the space $\cQ_n^{n(d-2)*}$ and consider the induced action of $\SL_n$ on $\PP(\cQ_n^{n(d-2)*})$. 

The map $\Phi$ descends to a morphism
$$
{\mathbb \Phi}\co \XX_n^d \to \PP(\cQ_n^{n(d-2)*}).
$$
By Proposition \ref{equivariance}, the morphism ${\mathbb \Phi}$ is equivariant:
$$
{\mathbb \Phi}(C\cdot f)=C\cdot {\mathbb \Phi}(f),\quad f\in\XX_n^d,\,\,C\in\SL_n. \label{equivarnewphi}
$$
Hence, in the case when ${\mathbb \Phi}$ maps the variety $\XX_n^d$ into the semistable locus $\PP(\cQ_n^{n(d-2)*})^{\ss}$ of $\PP(\cQ_n^{n(d-2)*})$, it gives rise to a morphism $\phi$ of good GIT quotients for which the following diagram commutes:
$$
\xymatrix{
\XX_n^d \ar[r]^{\hspace{-1cm}{\mathbb \Phi}} \ar[d] &\PP(\cQ_n^{n(d-2)*})^{\ss} \ar[d]\\
\XX_n^d\gitq\SL_n	\ar[r]^{\hspace{-1cm}\phi} & \PP(\cQ_n^{n(d-2)*})^{\ss}\gitq\SL_n.
}
$$
In the diagram, the quotient on the left is affine and geometric, and the one on the right is projective. Furthermore, $\XX_n^d$ is a Zariski open subset of the stable locus $\PP(\cQ_n^d)^{\s}$, hence the affine quotient $\XX_n^d\to\XX_n^d\gitq\SL_n$ is a restriction of the projective quotient $\PP(\cQ_n^d)^{\ss}\to\PP(\cQ_n^d)^{\ss}\gitq\SL_n$. Observe that the situation $n=2$, $d=3$ is trivial and can be excluded from consideration. Indeed, since all nondegenerate binary cubics are pairwise linearly equivalent, $\XX_2^3=\PP(\cQ_2^3)^{\ss}=\PP(\cQ_2^3)^{\s}$ is a single orbit and $\XX_2^3\gitq\SL_2$ is a point. For elementary introductions to GIT quotients and various notions of stability we refer the reader to \cite{Mu} and Chapter 9 in \cite{LR}.

We focus on the morphism ${\mathbb \Phi}$ in two cases. Indeed, notice that for all pairs $n,d$ (excluding the trivial situation $n=2$, $d=3$) one has $n(d-2)\ge d$, and the equality holds precisely for the following two pairs: $n=2$, $d=4$ and $n=3$, $d=3$. We will explain below that in  each of these two cases ${\mathbb \Phi}$ maps $\XX_n^d$ to $\PP(\cQ_n^{d*})^{\ss}$ and induces an equivariant involution on the variety $\XX_n^d$ with one orbit removed. Furthermore, we will see that such an involution is unique. For these purposes, in Subsections \ref{S:binary-quartics}--\ref{uniqueness} it will be convenient to regard associated forms as elements of $\cQ_n^d$ and ${\mathbb \Phi}$ as a map from $\XX_n^d$ to $\PP(\cQ_n^d)$ (see Remark \ref{olddef} for details).

Let us now describe the maps ${\mathbb \Phi}$ and $\phi$ in each of the two cases. Some of the facts that follow can be extracted from articles \cite{Ea}, \cite{EI}. 

\subsection{Binary quartics} \label{S:binary-quartics}
Let $n=2$, $d=4$. It is a classical result that every nondegenerate binary quartic is linearly equivalent to a quartic of the form
\begin{equation}
q_t(z_1,z_2):=z_1^4+tz_1^2z_2^2+z_2^4,\quad t\ne\pm 2\label{qt}
\end{equation}
(see pp.~277--279 in \cite{El}). A straightforward calculation yields that the associated form of $q_t$ is
\begin{equation}
{\mathbf q}_t(z_1,z_2):=\frac{1}{72(t^2-4)}(tz_1^4-12z_1^2z_2^2+tz_2^4).\label{bfqt}
\end{equation}
For $t\ne 0,\pm 6$ the quartic ${\mathbf q}_t$ is nondegenerate, and in this case the associated form of ${\mathbf q}_t$ is proportional to $q_t$, hence $\mathbb{\Phi}^2(q_t) = q_t$. As explained below, the exceptional quartics $q_0$, $q_6$, $q_{-6}$, are pairwise linearly equivalent.

It is easy to show that $\PP(\cQ_2^4)^{\ss}$ is the union of $\XX_2^4$ (which coincides with $\PP(\cQ_2^4)^{\s}$) and two orbits that consist of strictly semistable forms:\linebreak $O_1:=O(z_1^2z_2^2)$, $O_2:=O(z_1^2(z_1^2+z_2^2))$, of dimensions 2 and 3, respectively. Notice that $O_1$ is closed in $\PP(\cQ_2^4)^{\ss}$ and is contained in the closure of $O_2$. We then observe that ${\mathbb \Phi}$ maps $\XX_2^4$ onto $\PP(\cQ_2^4)^{\ss}\setminus (O_2\cup O_3)$, where $O_3:=O(q_0)$ (as we will see shortly, $O_3$  contains the other exceptional quartics $q_6$, $q_{-6}$ as well). Also, notice that ${\mathbb \Phi}$ maps the 3-dimensional orbit $O_3$ onto the 2-dimensional orbit $O_1$ (thus the stabilizer of $q_0$ is finite while that of $\mathbb{\Phi}(q_0)$ is one-dimensional). In particular, ${\mathbb \Phi}$ restricts to an equivariant involutive automorphism of $\XX_2^4\setminus O_3$, which for $t\ne 0,\pm 6$ establishes a duality between the quartics $C\cdot q_t$ and $(C^{-1})^T\cdot q_{-12/t}$ with $C\in\SL_2$, hence between the orbits $O(q_t)$ and $O(q_{-12/t})$ (see (\ref{equivarianceee})).  

In order to understand the induced map $\phi$ of GIT quotients, we note that the algebra of invariants $\CC[\cQ_2^4]^{\SL_2}$ is generated by a pair of elements $I_2$, $I_3$ (the latter invariant is called the catalecticant),
 where the subscripts indicate their degrees (see, e.g.,~pp.~41, 101--102 in \cite{El}). One has 
\begin{equation}
\Delta=I_2^3-27\,I_3^2,\label{deltabinquar}
\end{equation}
and for a binary quartic of the form
$$
f(z_1,z_2)=az_1^4+6bz_1^2z_2^2+cz_2^4
$$
the values of $I_2$ and $I_3$ are computed as 
\begin{equation}
\begin{array}{l}
I_2(f)=ac+3b^2,\quad I_3(f)=abc-b^3.\label{form1}
\end{array}
\end{equation}
It then follows that the algebra $\CC[X_2^4]^{\GL_2}\simeq\CC[\XX_2^4]^{\SL_2}$ is generated by the invariant
\begin{equation}
J:=\frac{I_2^3}{\Delta}.\label{form2}
\end{equation}
Therefore, the quotient $X_2^4\gitq\GL_2\simeq \XX_2^4\gitq\SL_2$ is the affine space $\CC$, and $\PP(\cQ_2^4)^{\ss}\gitq\SL_2$ can be identified with $\PP^1$, where both $O_1$ and $O_2$ project to the point at infinity in $\PP^1$. 

Next, from formulas \eqref{qt}, (\ref{deltabinquar}), \eqref{form1}, \eqref{form2} we calculate 
\begin{equation}
J(q_t)=\frac{(t^2+12)^3}{108(t^2-4)^2}\quad\hbox{for all $t\ne \pm 2$.}\label{form3}
\end{equation}
Clearly, \eqref{form3} yields
\begin{equation}
J(q_0)=J(q_6)=J(q_{-6})=1,\label{Jeq1}
\end{equation}
which implies that $q_0$, $q_6$, $q_{-6}$ are indeed pairwise linearly equivalent as claimed above and that the orbit $O_3$ is described by the condition $J=1$. 

Using (\ref{bfqt}), \eqref{form3} one obtains
\begin{equation}
J({\mathbf q}_t)=\frac{J(q_t)}{J(q_t)-1}\quad\hbox{for all $t\ne 0,\pm 6$.}\label{jtransfbinquar}
\end{equation}
Furthermore, the calculations leading to (\ref{jtransfbinquar}) also yield the following identities for any $f\in X_2^4$:
\begin{equation} \label{binary-pullbacks}
I_2(\mathbf{f}) = \frac{I_2(f)}{2^8 3^3 \Delta(f)}, \,\,
I_3(\mathbf{f}) = -\frac{1}{2^{12} 3^6 \Delta(f)}, \,\, \Delta(\mathbf{f}) = \frac{I_3(f)^2}{2^{24} 3^6 \Delta(f)^3}.
\end{equation}
Hence, we observe: $I_3(\mathbf{f}) \neq 0$ (that is, the catalecticant of the associated form does not vanish),  $I_2(\mathbf{f}) = 0$ if and only if $I_2(f) = 0$,  and $\Delta(\mathbf{f}) = 0$ if and only if $I_3(f) = 0$.

Formula (\ref{jtransfbinquar}) shows that the map $\phi$ extends to the automorphism $\tilde\phi$ of $\PP^1$ given by
$$
\zeta\mapsto\frac{\zeta}{\zeta-1}.
$$
Clearly, one has $\tilde\phi^{\,2}=\hbox{id}$, that is, $\tilde\phi$ is an involution. It preserves $\PP^1\setminus\{1,\infty\}$, which corresponds to the duality between the orbits $O(q_t)$ and $O(q_{-12/t})$ for $t\ne 0,\pm 6$ noted above. Further, $\tilde\phi(1)=\infty$, which agrees with \eqref{Jeq1} and the fact that $O_3$ is mapped onto $O_1$. We also have $\tilde\phi(\infty)=1$, but this identity has no interpretation at the level of orbits.  Indeed, ${\mathbb \Phi}$ cannot be equivariantly extended to an involution $\PP(\cQ_2^4)^{\ss} \to \PP(\cQ_2^4)^{\ss}$ as the fiber of the quotient $\PP(\cQ_2^4)^{\ss}\gitq\SL_2$ over the point at infinity contains $O_1$, which cannot be mapped onto $O_3$ since $\dim O_1<\dim O_3$.

\subsection{Ternary cubics} \label{S:cubics}
Let $n=3$, $d=3$. Every nondegenerate  ternary cubic is linearly equivalent to a cubic of the form
\begin{equation}
c_t(z_1,z_2,z_3):=z_1^3+z_2^3+z_3^3+tz_1z_2z_3,\quad t^3\ne -27\label{ct}
\end{equation} 
(see, e.g.,~Theorem 1.3.2.16 in \cite{Sc}). The associated form of $c_t$ is easily found to be
\begin{equation}
{\mathbf c}_t(z_1,z_2,z_3):=-\frac{1}{24(t^3+27)}(tz_1^3+tz_2^3+tz_3^3-18z_1z_2z_3).\label{bfct}
\end{equation}
For $t\ne 0$, $t^3\ne 216$ the cubic ${\mathbf c}_t$ is nondegenerate, and in this case the associated form of ${\mathbf c}_t$ is proportional to $c_t$, hence $\mathbb{\Phi}^2 (c_t) = c_t$.  Below we will see that the exceptional cubics $c_0$, $c_{6\tau}$, with $\tau^3=1$, are pairwise linearly equivalent.

It is well-known (see, e.g.,~Theorem 1.3.2.16 in \cite{Sc}) that $\PP(\cQ_3^3)^{\ss}$ is the union of $\XX_3^3$ (which coincides with $\PP(\cQ_3^3)^{\s}$) and the following three orbits that consist of strictly semistable forms: ${\rm O}_1:=O(z_1z_2z_3)$, ${\rm O}_2:=O(z_1z_2z_3+z_3^3)$, ${\rm O}_3:=O(z_1^3+z_1^2z_3+z_2^2z_3)$ (the cubics lying in ${\rm O}_3$ are called nodal). The dimensions of the orbits are 6, 7 and 8, respectively. Observe that ${\rm O}_1$ is closed in $\PP(\cQ_3^3)^{\ss}$ and is contained in the closures of each of ${\rm O}_2$, ${\rm O}_3$. We then see that ${\mathbb \Phi}$ maps $\XX_3^3$ onto $\PP(\cQ_3^3)^{\ss}\setminus ({\rm O}_2\cup {\rm O}_3\cup {\rm O}_4)$, where ${\rm O}_4:=O(c_0)$ (as explained below, ${\rm O}_4$ also contains the other exceptional cubics $c_{6\tau}$, with $\tau^3=1$). Further, note that the 8-dimensional orbit ${\rm O}_4$ is mapped by ${\mathbb \Phi}$ onto the 6-dimensional orbit ${\rm O}_1$ (thus the morphism of the stabilizers of $c_0$ and ${\mathbb{\Phi}(c_0)}$ is an inclusion of a finite group into a two-dimensional group). Hence, ${\mathbb \Phi}$ restricts to an equivariant involutive automorphism of $\XX_3^3\setminus {\rm O}_4$, which for $t\ne 0$, $t^3\ne 216$ establishes a duality between the cubics $C\cdot c_t$ and $(C^{-1})^T\cdot c_{-18/t}$ with $C\in\SL_3$, therefore between the orbits $O(c_t)$ and $O(c_{-18/t})$ (see (\ref{equivarianceee})).

To determine the induced map $\phi$ of GIT quotients, we recall that the algebra of invariants $\CC[\cQ_3^3]^{\SL_3}$ is generated by the two Aronhold invariants ${\rm I}_4$, ${\rm I}_6$, where, as before, the subscripts indicate the degrees (see pp.~381--389 in \cite{El}). One has
\begin{equation}
\Delta={\rm I}_6^2+64\, {\rm I}_4^3,\label{discrtercub}
\end{equation}
and for a ternary cubic of the form
\begin{equation}
f(z_1,z_2,z_3)=az_1^3+bz_2^3+cz_3^3+6dz_1z_2z_3\label{generaltercubic}
\end{equation}
the values of ${\rm I}_4$ and ${\rm I}_6$ are calculated as 
\begin{equation}
\begin{array}{l}
{\rm I}_4(f)=abcd-d^4,\quad{\rm  I}_6(f)=a^2b^2c^2-20abcd^3-8d^6.\label{form11}
\end{array}
\end{equation}
It then follows that the algebra $\CC[X_3^3]^{\GL_3}\simeq\CC[\XX_3^3]^{\SL_3}$ is generated by the invariant
\begin{equation}
{\rm J}:=\frac{64\,{\rm I}_4^3}{\Delta}.\label{form21}
\end{equation}
Hence, the quotient $X_3^3\gitq\GL_3\simeq\XX_3^3\gitq\SL_3$ is the affine space $\CC$, and $\PP(\cQ_3^3)^{\ss}\gitq\SL_3$ is identified with $\PP^1$, where ${\rm O}_1$, ${\rm O}_2$, ${\rm O}_3$ project to the point at infinity in $\PP^1$. 

Further, from formulas \eqref{ct}, (\ref{discrtercub}), \eqref{form11}, \eqref{form21} we find 
\begin{equation}
{\rm J}(c_t)=-\frac{t^3(t^3-216)^3}{2^63^3(t^3+27)^3}\quad\hbox{for all $t$ with $t^3\ne -27$.}\label{form31}
\end{equation}
From identity \eqref{form31} one obtains
\begin{equation}
{\rm J}(c_0)={\rm J}(c_{6\tau})=0\quad\hbox{for $\tau^3=1$,}\label{Jeq11}
\end{equation}
which implies that the orbit ${\rm O}_4$ is given by the condition ${\rm J}=0$ and that the four cubics $c_0$, $c_{6\tau}$ are indeed pairwise linearly equivalent.

Using \eqref{bfct}, \eqref{form31} we see
\begin{equation}
{\rm J}({\mathbf c}_t)=\frac{1}{{\rm J}(c_t)}\quad\hbox{for all $t\ne 0$ with $t^3\ne 216$.}\label{jtransftercubics}
\end{equation}
Furthermore, the calculations leading to (\ref{jtransftercubics}) also yield the following identities for any $f \in X_3^3$:
\begin{equation}
\begin{array}{l}
\displaystyle{\rm I}_4(\mathbf{f}) = -\frac{1}{2^{12} 3^{12} \Delta(f)}, \,\,
{\rm I}_6(\mathbf{f}) = -\frac{{\rm I}_6(f)}{2^{15} 3^{18} \Delta(f)^2},\\
\vspace{-0.3cm}\\
\displaystyle\Delta(\mathbf{f}) = -\frac{{\rm I}_4(f)^3}{2^{24} 3^{36} \Delta(f)^4}.
\end{array}\label{formauxcubics}
\end{equation}
Hence, we obtain: ${\rm I}_4(\mathbf{f}) \neq 0$ (that is, the degree 4 Aronhold invariant of the associated form does not vanish), ${\rm I}_6(\mathbf{f}) = 0$ if and only if  ${\rm I}_6(f) = 0 $, and $\Delta(\mathbf{f}) = 0$ if and only if ${\rm I}_4(f) = 0$.  

Formula (\ref{jtransftercubics}) shows that the map $\phi$ extends to the involutive automorphism $\tilde\phi$ of $\PP^1$ given by
$$
\zeta\mapsto\frac{1}{\zeta}.
$$
This involution preserves $\PP^1\setminus\{0,\infty\}$, which agrees with the duality between the orbits $O(c_t)$ and $O(c_{-18/t})$ for $t\ne 0$, $t^3\ne 216$ established above. Next, $\tilde\phi(0)=\infty$, which corresponds to \eqref{Jeq11} and the facts that ${\rm O}_4$ is mapped onto ${\rm O}_1$, and that ${\rm I}_4(f) = 0$ implies $\Delta(\mathbf{f}) =0$. Also, one has $\tilde\phi(\infty)=0$, but this identity cannot be illustrated by a correspondence between orbits.  Indeed, ${\mathbb \Phi}$ cannot be equivariantly extended to an involution $\PP(\cQ_3^3)^{\ss} \to \PP(\cQ_3^3)^{\ss}$ as  the fiber of the quotient $\PP(\cQ_3^3)^{\ss}\gitq\SL_2$ over the point at infinity contains $O_1$, which cannot be mapped onto $O_4$ since $\dim O_1<\dim O_4$.

\begin{remark}\label{invsys}
We note that a cubic proportional to \eqref{bfct} previously appeared in \cite{Em} (see p.~405 therein) as a Macaulay inverse system for the Milnor algebra $M_{c_t}$, but it has never been studied systematically. In fact, we now know (see Corollary 3.3 in \cite{AI1}) that the associated form of any $f\in X_n^d$ is an inverse system for $M_f$ when regarded as an element of $\cQ_n^{n(d-2)}$. This result has been instrumental in our recent work on the morphism $\Phi$ including the progress on  Conjecture \ref{conj2}, and it will be also utilized in the proof of Theorem \ref{main} below (see Lemma \ref{PhiPsi}). 
For details on inverse systems we refer the reader to \cite{Ma}, \cite{Em}, \cite{I} (the brief survey given in \cite{ER} is also helpful). We also note that, although the Hessian $H(f)$ is utilized in the definition of the associated form ${\mathbf f}$ of $f$, it is in fact very different from ${\mathbf f}$. Indeed, as shown in \cite{DBP}, for binary quartics and ternary cubics, $H(f)$ does not coincide with ${\mathbf f}$ up to projective equivalence except in a few cases (see Propositions 4.1 an 5.1 therein). We will elaborate on this difference in Subsection \ref{S:contra1}.
\end{remark}

If we regard $\XX_3^3$ as the space of elliptic curves, the invariant ${\rm J}$ of ternary cubics translates into the $j$-invariant, and one obtains an equivariant involution on the locus of elliptic curves with nonvanishing $j$-invariant. It is well-known that every elliptic curve can be realized as a double cover of $\PP^1$ branched over four points (see, e.g.,~Exercise 22.37 and Proposition 22.38 in \cite{Ha}). Therefore, it is not surprising that the cases of binary quartics and ternary cubics considered above have many similarities.  What is perhaps surprising though is that the map $\mathbb{\Phi}$ for binary quartics and ternary cubics yields {\it different}\, involutions on $\PP^1$. It is natural to ask whether there exist any other involutions of $\PP^1$ that arise from rational equivariant involutions on $\PP(\cQ_2^4)$ and $\PP(\cQ_3^3)$ as above. The result of the next subsection provides a complete answer to this question.

\subsection{Uniqueness of rational equivariant involutions}\label{uniqueness}
In this subsection we classify rational $\SL_n$-invariant involutions 
$$
\iota\co\PP(\cQ_n^d)\dashrightarrow \PP(\cQ_n^d)
$$
for $n=2$, $d=4$ and $n=3$, $d=3$. Here the equivariance is understood either as
\begin{equation}
\iota(C\cdot f)=C\cdot\iota(f),\label{equivartype1}
\end{equation}
or as
\begin{equation}
\iota(C\cdot f)=(C^{-1})^T\cdot\iota(f),\label{equivartype2}
\end{equation}
where $C\in\SL_n$ and $f$ lies in the domain of $\iota$. The identity morphism and the map ${\mathbb\Phi}$ are respective examples of such involutions. The result below asserts that there are no other possibilities.

\begin{theorem} \label{invclass} For each pair $n=2$, $d=4$ and $n=3$, $d=3$ the following holds:
\begin{enumerate}
\item[{\rm (i)}] the identity morphism is the unique rational involution of\, $\PP(\cQ_n^d)$ satisfying {\rm (\ref{equivartype1})};
\item[{\rm (ii)}]  the morphism ${\mathbb \Phi}$ is the unique rational involution of\, $\PP(\cQ_n^d)$ satisfying {\rm (\ref{equivartype2})}.
\end{enumerate}
\end{theorem}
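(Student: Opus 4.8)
The plan is to reduce everything to the one–parameter normal forms \eqref{qt} and \eqref{ct}. Let $\Gamma\subset\SL_n$ denote the stabilizer of a generic member of the family — for $n=2$ the binary Klein four–group $Q_8$, and for $n=3$ the Heisenberg group of order $27$ — and set $L:=\mathrm{Fix}_{\PP(\cQ_n^d)}(\Gamma)$. A short computation identifies $L$ with a line $\cong\PP^1$ whose generic points are exactly the $q_t$ (resp.\ $c_t$), so $L$ is the closure of the standard normal–form pencil. First I would show that every rational equivariant involution $\iota$ carries $L$ into itself. For type \eqref{equivartype1} this is immediate: if $C\in\Gamma$ then $C\cdot\iota(q_t)=\iota(C\cdot q_t)=\iota(q_t)$, so $\Gamma$ fixes $\iota(q_t)$, whence $\iota(q_t)\in\mathrm{Fix}(\Gamma)=L$. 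For type \eqref{equivartype2} the same argument works because $\{(C^{-1})^{T}:C\in\Gamma\}=\Gamma$ in both cases (the defining generators of $\Gamma$ are stable under $C\mapsto(C^{-1})^{T}$), so again $\iota(q_t)\in\mathrm{Fix}(\Gamma)=L$.

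Hence $\iota|_L$ is a rational involution of $\PP^1$, i.e.\ a M\"obius involution $s$. The next step is to pin down $s$ using single–valuedness on $\SL_n$–orbits. The residual symmetry group $R:=N_{\SL_n}(\Gamma)/\Gamma$ acts faithfully on $L$ — it is the anharmonic group $S_3$ on the $t$–line for binary quartics, and the tetrahedral group $A_4$ on the Hesse pencil for ternary cubics — and well–definedness forces $s$ to be compatible with this action. This is where the two equivariance types part company: type \eqref{equivartype1} requires $s\sigma=\sigma s$ for all $\sigma\in R$, i.e.\ $s\in C_{\PGL_2}(R)$, whereas type \eqref{equivartype2} requires $s\sigma=\sigma^{*}s$, where $\sigma^{*}$ is the transport to $L$ of the $R$–action through the contragredient $C\mapsto(C^{-1})^{T}$. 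From \eqref{bfqt} and \eqref{bfct} one reads off the candidate restrictions $s=\id$ and $s:t\mapsto-12/t$ (resp.\ $t\mapsto-18/t$), the latter being $\mathbb{\Phi}|_L$; Proposition \ref{equivariance} confirms they have types \eqref{equivartype1} and \eqref{equivartype2} respectively.

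For ternary cubics this closes cleanly, and I expect it to be the model case. Because the standard representation of $\SL_3$ is not self–dual, $\sigma\mapsto\sigma^{*}$ is the genuinely outer automorphism of $R\cong A_4$; as $A_4$ has trivial centralizer in $\PGL_2(\CC)$, the commuting condition for type \eqref{equivartype1} forces $s=\id$ and hence $\iota=\id$, while the intertwining condition for type \eqref{equivartype2} has, if solvable, a unique solution — the conjugator between the two $A_4$–actions — which must therefore be $\mathbb{\Phi}|_L$, giving $\iota=\mathbb{\Phi}$. Once $s$ is fixed, equivariance propagates $\iota$ off $L$ to a unique map on all of $\PP(\cQ_3^3)$.

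The main obstacle is binary quartics, where the accidental identity $(C^{-1})^{T}=WCW^{-1}$ with $W=\bigl(\begin{smallmatrix}0&1\\-1&0\end{smallmatrix}\bigr)$ makes the contragredient inner, and in fact inner by an element of $\Gamma=Q_8$. Consequently $\sigma^{*}=\sigma$, both compatibility conditions collapse to $s\in C_{\PGL_2}(S_3)\cong\ZZ/2$, and the reduction alone leaves two admissible restrictions $s\in\{\id,\ t\mapsto-12/t\}$ for each equivariance type. To separate the types and discard the spurious centralizer element I would argue explicitly: pass to the induced M\"obius involution on the quotient $\PP^1=\XX_2^4\gitq\SL_2$, use the transformation law \eqref{jtransfbinquar} together with the pull–back formulas \eqref{binary-pullbacks} to tie $s$ to the actual action of $\mathbb{\Phi}$ on the invariants $I_2,I_3$, and exploit the fact — already visible near the exceptional orbit $O_3$ — that $\mathbb{\Phi}$ does not extend to an involution across the semistable boundary, which rigidifies the admissible lift. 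This last step, reconciling the self–duality of the standard representation of $\SL_2$ with the desired uniqueness, is where I expect the real work to lie.
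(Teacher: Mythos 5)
Your overall strategy is the same as the paper's: restrict $\iota$ to the pencil of normal forms (the paper does this with explicit stabilizer matrices, you do it via $L=\mathrm{Fix}(\Gamma)$), observe that $\iota|_L$ is a M\"obius involution, and pin it down using the residual symmetries of the pencil. For ternary cubics your argument is correct and is in fact a cleaner conceptual packaging of the paper's computation: the paper's constraints $\alpha(\tau t)=\tau\alpha(t)$ for \eqref{equivartype1} versus $\alpha(\tau t)=\tau^{2}\alpha(t)$ for \eqref{equivartype2} are exactly your statement that the contragredient transports the $A_4$-action to an outer-twisted copy, and triviality of the centralizer of the tetrahedral $A_4$ in $\PGL_2(\CC)$ then gives uniqueness in each class. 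Two small repairs: $\mathrm{Fix}(\Gamma)$ is not exactly $L$ but $L$ together with finitely many isolated points (projectivized semi-invariant lines, e.g.\ $z_1^4-z_2^4$ for $n=2$), so you must use equivariance plus dominance of the birational map $\iota$ to rule out that the pencil is contracted to one of them; and for $n=3$ the group $R=N_{\SL_3}(\Gamma)/\Gamma$ acts on $L$ only through $A_4$, not faithfully. Neither affects the structure.

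For binary quartics, however, the gap you flag in your last paragraph cannot be closed, and your own observation explains why: it is a defect of Theorem \ref{invclass} itself, not of your reduction. Since $W=\left(\begin{smallmatrix}0&1\\-1&0\end{smallmatrix}\right)$ realizes the contragredient, $(C^{-1})^T=WCW^{-1}$, and $W$ lies in $\Gamma=Q_8$, composition with $g\mapsto W\cdot g$ exchanges properties \eqref{equivartype1} and \eqref{equivartype2}, preserves the property of being a rational involution (as $W^2=-I$ acts trivially on quartics), and changes nothing on $L$ (because $W\cdot q_t=q_t$). Hence \emph{both} alternatives in \eqref{tworel} are realized for \emph{both} equivariance types: the linear map $f\mapsto W\cdot f$ is a rational involution satisfying \eqref{equivartype2} distinct from ${\mathbb \Phi}$, and $W\cdot{\mathbb \Phi}$ --- which by the paper's own identity \eqref{covar1} is the projectivization of the covariant $\frac{1}{2^7 3^3}I_2H-\frac{1}{2^4}I_3{\mathbf{id}}$ and therefore satisfies \eqref{equivartype1} exactly --- is a nontrivial rational involution of type \eqref{equivartype1}: a direct computation with \eqref{qt} and \eqref{form1} gives
$$
I_2(q_t)H(q_t)-2^3 3^3 I_3(q_t)\,q_t=3(t^2-4)\bigl(t(z_1^4+z_2^4)-12\,z_1^2z_2^2\bigr)\;\propto\; q_{-12/t},
$$
so this map sends $q_t\mapsto q_{-12/t}$ and squares to the identity on the dense set $\SL_2\cdot\{q_t\}$. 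In particular none of your proposed rescue arguments can work: the quotient $\XX_2^4\gitq\SL_2$ and the formulas \eqref{binary-pullbacks}, \eqref{jtransfbinquar} see only $\SL_2$-invariants, and ${\mathbb \Phi}$, $W\cdot{\mathbb \Phi}$ induce the same involution $\zeta\mapsto\zeta/(\zeta-1)$ on $\PP^1$, just as $\id$ and $f\mapsto W\cdot f$ both induce the identity.

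You should also be aware that the paper's proof fails at precisely the spot you identified: after deriving \eqref{tworel} it concludes with the unproved assertions that the ``wrong'' alternative ``contradicts'' the respective equivariance property, and the counterexamples above show those assertions are false for $n=2$, $d=4$ (they are true for $n=3$, $d=3$, where the contragredient is outer --- this is what the matrix \eqref{specmatr4} detects). So the honest outcome of your approach is: for ternary cubics, Theorem \ref{invclass} holds and your argument proves it; for binary quartics, the correct conclusion of the classification is that there are exactly two involutions of each type, namely $\{\id,\,W\cdot{\mathbb \Phi}\}$ for \eqref{equivartype1} and $\{f\mapsto W\cdot f,\ {\mathbb \Phi}\}$ for \eqref{equivartype2}, and the uniqueness asserted in the theorem holds only after identifying maps that differ by composition with $W$.
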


\begin{proof} Let $n=2$, $d=4$. Recall from Subsection \ref{S:binary-quartics} that every generic binary quartic is linearly equivalent to some quartic $q_t = z_1^4 + t z_1^2z_2^2 + z_2^4$, with $t \in \CC$, $t\ne \pm 2$, and that ${\mathbb \Phi}(q_t) = q_{-12/t}$ if $t\ne 0$. Therefore, in order to establish the theorem, it suffices to prove that in (i) (resp. (ii)) one has $\iota(q_t)=q_t$ (resp. $\iota(q_t)=q_{-12/t}$) for a generic $t$. 

We first obtain part (i).  For a generic $t$ one can write
$$
\iota(q_t) = \alpha_{4,0} z_1^4 + 4 \alpha_{3,1} z_1^3 z_2 + 6 \alpha_{2,2} z_1^2 z_2^2 + 4 \alpha_{1,3} z_1z_2^3 + \alpha_{0,4} z_2^4,
$$
where $\alpha_{i,j} \in \CC[t]$.   Consider $C = \begin{pmatrix} 0 & i \\ i & 0 \end{pmatrix}$. Since $C \cdot q_t = q_t$ for all $t$ and $\iota$ is equivariant, it follows that $\alpha_{4,0} = \alpha_{0,4}$.  Similarly, by considering $C = \left(\hspace{-0.2cm}\begin{array}{rr} -i & 0 \\ 0 & i \end{array}\hspace{-0.1cm}\right)$, we see $\alpha_{3,1} = \alpha_{1,3} = 0$.  Observe now that $\alpha_{4,0}$ does not vanish identically since otherwise $\iota$ would be a constant map. Therefore, one can write
$$
\iota(q_t) = z_1^4 + \alpha z_1^2 z_2^2 + z_2^4,
$$
with $\alpha:= 6\alpha_{2,2} / \alpha_{4,0} \in \CC(t)$.

As $\iota$ is a birational morphism, the assignment $t \mapsto \alpha(t)$ extends to an automorphism of $\PP^1$, i.e.~we have 
\begin{equation}
\alpha(t) = \frac{at+b}{ct+d}\label{isomp1}
\end{equation}
for some $\begin{pmatrix} a & b \\ c & d \end{pmatrix}\in\GL_2$.  Consider $C = \begin{pmatrix} \sqrt{i} & 0 \\ 0 & -i \sqrt{i} \end{pmatrix}$.  By observing that $C\cdot q_t=q_{-t}$ for all $t$ and using the equivariance of $\iota$, we obtain $\alpha(-t) =-\alpha(t)$. This in turn implies that either $b=c=0$ or $a=d=0$. If $b=c=0$, the fact that $\iota$ is an involution leads to $a=\pm d$. Suppose that $a=-d$ hence $\iota(q_t)=q_{-t}$. For
\begin{equation}
C = \displaystyle\frac{1}{\sqrt{2}}\left(\begin{array}{rr} 1 & 1 \\ -1 & 1 \end{array}\right)\label{specialmat1}
\end{equation}
one computes
\begin{equation}
C\cdot q_t=q_{{}_{\frac{-2t+12}{t+2}}}\label{specrel1}.
\end{equation}
The equivariance of $\iota$ then leads to a contradiction. 
If $a=d=0$, then utilizing matrix (\ref{specialmat1}) with relation (\ref{specrel1}) once again, we conclude $b/c =-12$.

Thus, we have obtained:
\begin{equation}
\hbox{either $\iota(q_t) = q_t$ or $\iota(q_t) =q_{-12/t}$ for a generic $t\in\CC$.}\label{tworel}
\end{equation}
Since the second relation in (\ref{tworel}) contradicts equivariance property (\ref{equivartype1}), it follows that $\iota$ is the identity morphism, and part (i) is established.

Notice that an argument analogous to that for part (i) yields relations (\ref{tworel}) for part (ii) as well. Since the first relation in (\ref{tworel}) contradicts equivariance property (\ref{equivartype2}), we obtain $\iota={\mathbb \Phi}$. This concludes the proof for $n=2$, $d=4$.

Suppose now that $n=3$, $d=3$. As stated in Subsection \ref{S:cubics}, every generic ternary cubic is linearly equivalent to some cubic\linebreak $c_t = z_1^3+z_2^3+z_3^3+tz_1z_2z_3$, with $t \in \CC$, $t^3\ne -27$, and one has $\mathbb{\Phi}(c_t) = c_{-18/t}$ if $t\ne 0$.  Thus, to prove the theorem, it suffices to show that in (i) (resp. (ii)) one has $\iota(c_t)=c_t$ (resp. $\iota(c_t)=c_{-18/t}$) for a generic $t$. 

We first obtain part (i).  For a generic $t$ one can write
$$
\begin{array}{lll}
\iota(c_t) &=& \alpha_{3,0,0} z_1^3+ 3 \alpha_{2,1,0} z_1^2z_2+3 \alpha_{1,2,0} z_1z_2^2+\alpha_{0,3,0}z_2^3+\\
\vspace{-0.3cm}\\
&& \hspace{.6cm} 3\alpha_{2,0,1}z_1^2z_3+6\alpha_{1,1,1}z_1z_2 z_3+3\alpha_{0,2,1}z_2^2z_3+\\
\vspace{-0.3cm}\\
&&\hspace{1.7cm}  3\alpha_{1,0,2}z_1z_3^2+3\alpha_{0,1,2}z_2 z_3^2 + \\
\vspace{-0.3cm}\\
&&\hspace{3.2cm}   \alpha_{0,0,3} z_3^3,
\end{array}
$$
where $\alpha_{i,j,k} \in \CC[t]$. Consider
$$
C = \left(\begin{array}{rrr} 0 & -1 & 0 \\ -1 & 0 & 0 \\ 0 & 0 & -1 \end{array}\right).
$$
Since $C \cdot c_t = c_t$ for all $t$ and $\iota$ is equivariant, we immediately see $\alpha_{3,0,0} = \alpha_{0,3,0}$. A similar choice of $C$ yields $\alpha_{0,3,0}=\alpha_{0,0,3}$, thus we have $\alpha_{3,0,0} = \alpha_{0,3,0}=\alpha_{0,0,3}$.

Next, let $\tau\ne 1$ satisfy $\tau^3=1$ and consider
$$
C = \left(\begin{array}{rrr} \tau & 0 & 0 \\ 0 & \tau^2 & 0 \\ 0 & 0 & 1 \end{array}\right).
$$
Then again $C \cdot c_t = c_t$ for all $t$, and the equivariance of $\iota$ implies
$$
\alpha_{2,1,0}=\alpha_{1,2,0}=\alpha_{2,0,1}=\alpha_{0,2,1}=\alpha_{1,0,2}=\alpha_{0,1,2}=0.
$$
It now follows that $\alpha_{3,0,0}$ does not vanish identically, thus one can write
\begin{equation}
\iota(c_t) = z_1^3+z_2^3+z_3^3 + \alpha z_1z_2z_3,\label{iotaofct}
\end{equation}
with $\alpha:= 6\alpha_{1,1,1} / \alpha_{3,0,0} \in \CC(t)$ having the form (\ref{isomp1}). 

Further, setting
\begin{equation}
C = \left(\begin{array}{rrr} \tau^{1/3} & 0 & 0 \\ 0 & \tau^{1/3} & 0 \\ 0 & 0 & \displaystyle\frac{1}{\tau^{2/3}} \end{array}\right)\label{specmatr4}
\end{equation}
and observing that $C\cdot c_t=c_{\tau t}$ for all $t$, we obtain using the equivariance of $\iota$ that $\alpha(\tau t) =\tau\alpha(t)$. This implies $b=c=0$, hence $a=\pm d$, with the case $a=d$ yielding that $\iota$ is the identity. 
Assume that $a=-d$, thus $\iota(c_t)=c_{-t}$. If
\begin{equation}
C = \displaystyle\frac{1}{(3(\tau^2-\tau))^{1/3}}\left(\begin{array}{ccc} 1 & 1 & 1 \\ \tau & \tau^2 & 1\\ \tau^2 &\tau & 1 \end{array}\right),\label{specialmat2}
\end{equation}
then
\begin{equation}
C\cdot c_t=c_{{}_{\frac{-3t+18}{t+3}}},\label{specrel2}
\end{equation}
and the equivariance of $\iota$ leads to a contradiction. This concludes the proof for part (i).

We will now obtain part (ii). The same method leads to formula (\ref{iotaofct}) with $\alpha$ as in (\ref{isomp1}). Using matrix (\ref{specmatr4}) and the equivariance of $\iota$, we see that in this case $\alpha(\tau t)=\tau^2\alpha(t)$, which yields $a=d=0$. Utilizing matrix (\ref{specialmat2}) and appealing to relation (\ref{specrel2}), we conclude $b/c =-18$. Therefore, $\iota(q_t) = q_{-18/t}$ for a generic $t\in\CC$. Hence $\iota={\mathbb \Phi}$, which completes the proof of the theorem.\end{proof}

\subsection{Projective duality}
In this subsection we will see that for $n=2$, $d=4$ and $n=3$, $d=3$ the map ${\mathbb \Phi}$, and therefore the orbit duality induced by ${\mathbb \Phi}$, can be understood via projective duality. We will now briefly recall this classical construction. For details the reader is referred to the comprehensive survey \cite{T}. 

Let $W$ be a complex vector space and $\PP(W)$ its projectivization. The dual projective space $\PP(W)^*$ is the algebraic variety of all hyperplanes in $W$, which is canonically isomorphic to $\PP(W^*)$. Let $X$ be an irreducible subvariety of $\PP(W)$ and $X_{\reg}$ the set of its regular points. Consider the affine cone $\hat X\subset W$ over $X$. For every $x\in X_{\reg}$ choose a point $\hat x\in\hat X$ lying over $x$. The cone $\hat X$ is regular at $\hat x$, and we consider the tangent space $T_{\hat x}(\hat X)$ to $\hat X$ at $\hat x$. Identifying $T_{\hat x}(\hat X)$ with a subspace of $W$, we now let $H_x$ be the collection of all hyperplanes in $W$ that contain $T_{\hat x}(\hat X)$ (clearly, this collection is independent of the choice of $\hat x$ over $x$). Regarding each hyperplane in $H_x$ as a point in $\PP(W)^*$, we obtain the subset
$$
H:=\bigcup_{x\in X_{\reg}}H_x \subset \PP(W)^*.
$$
The Zariski closure $X^*$ of $H$ in $\PP(W)^*$ is then called the variety dual to $X$. Canonically identifying $\PP(W)^{**}$ with $\PP(W)$, one has the reflexivity property $X^{**}=X$. Furthermore, if $X$ is a hypersurface, there exists a natural map from $X_{\reg}$ to $X^*$, as follows:
$$
\varphi: X_{\reg}\to X^*,\quad x\mapsto T_{\hat x}(\hat X)\subset W,
$$
where $\hat x\in\hat X$ is related to $x\in X_{\reg}$ as above.  

Observe now that in each of the two cases $n=2$, $d=4$ and $n=3$, $d=3$, for $f\in \XX_n^d$ the orbit $O(f)$ is a smooth irreducible hypersurface in $\XX_n^d$, thus its closure $\overline{O(f)}$ in $\PP(\cQ_n^d)$ is an irreducible (possibly singular) hypersurface. Therefore, one can consider the map
\begin{equation}
\varphi_f: \overline{O(f)}_{\reg}\to \PP(\cQ_n^d)^*\label{mapvarphismall}
\end{equation}
constructed as above. Also, recall that for $n=2$, $d=4$ or $n=3$, $d=3$, the morphism $\Phi$ descends to the morphism
$$
{\mathbb \Phi}\co \XX_n^d \to \PP(\cQ_n^{d*}).
$$
We are now ready to state the first main result of the paper, which relates these two maps.

\begin{theorem}\label{main}  Let $n=2$, $d=4$ or $n=3$, $d=3$.  For every $f\in\XX_n^d$ the restrictions ${\mathbb \Phi}\big|_{O(f)}$ and $\varphi_f\big|_{O(f)}$ coincide upon the canonical identification $\PP(\cQ_n^{d*})=\PP(\cQ_n^d)^*$.
\end{theorem}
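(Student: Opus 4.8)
The plan is to evaluate both maps directly at $f$ and then to deduce the statement on the whole orbit $O(f)$ by equivariance. First note that the affine cone $\hat X$ over $\overline{O(f)}$ is nothing but $\overline{\GL_n\cdot\hat f}$: the scalar matrices $\lambda I$ act on $\cQ_n^d$ by $f\mapsto\lambda^{-d}f$, so $\GL_n\cdot\hat f=\CC^{\times}\cdot(\SL_n\cdot\hat f)$ is exactly the punctured cone over $O(f)$. In particular $\hat X$ is $\SL_n$-invariant. Both maps are then $\SL_n$-equivariant: for ${\mathbb \Phi}$ this is Proposition \ref{equivariance}, and for $\varphi_f$ it follows from the linearity of the action together with the invariance of $\hat X$, which gives $T_{C\cdot\hat x}(\hat X)=C\cdot T_{\hat x}(\hat X)$ for $C\in\SL_n$. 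Since the canonical identification $\PP(\cQ_n^{d*})=\PP(\cQ_n^d)^*$ intertwines the dual action on functionals with the action on their kernel hyperplanes, the two equivariances match, and it suffices to prove ${\mathbb \Phi}(f)=\varphi_f(f)$ for one representative $f$.

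Next I would compute the tangent hyperplane $\varphi_f(f)=T_{\hat f}(\hat X)$. As the $\GL_n$-orbit of $\hat f$ is a dense subvariety of the irreducible cone $\hat X$ of the same dimension $n^2$, and $\hat f$ is a regular point of $\hat X$, the space $T_{\hat f}(\hat X)$ coincides with the tangent space to the orbit, i.e.~the image of the infinitesimal $\mathfrak{gl}_n$-action. Differentiating $s\mapsto\exp(sA)\cdot\hat f$ at $s=0$ yields $-\sum_{i,j}A_{ij}z_jf_i$ (with $f_i=\partial f/\partial z_i$), so letting $A$ range over $\mathfrak{gl}_n$ gives
$$
T_{\hat f}(\hat X)=\Span\{z_jf_i:1\le i,j\le n\}.
$$
In the two cases under consideration $n(d-2)=d$, whence these products span precisely the degree-$d$ piece of the Jacobian ideal $(f_1,\dots,f_n)$, which is the subspace $W_f$ of \eqref{subspace}. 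Thus $\varphi_f(f)$ is the hyperplane $W_f\subset\cQ_n^d$.

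It then remains to identify ${\mathbb \Phi}(f)$ with the same hyperplane. Transported to $\cQ_n^{n(d-2)*}=\cQ_n^{d*}$ by the polar pairing, the associated form $\mathbf f$ is, up to scale, the residue functional $g\mapsto(2\pi i)^{-n}\int g\,dz_1\wedge\cdots\wedge dz_n/(f_1\cdots f_n)$ recalled in Section \ref{intro}; equivalently it sends $g\in\cQ_n^d=\cQ_n^{n(d-2)}$ to $\omega$ of its class in $\Soc(M_f)=\fm^{\nu}$. Either description shows this functional annihilates the ideal $(f_1,\dots,f_n)$, whose degree-$n(d-2)$ part is exactly $W_f$; and since the functional is nonzero (it takes value $1$ on $H(f)$, complementary to $W_f$) its kernel is precisely the hyperplane $W_f$. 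Under $\PP(\cQ_n^{d*})=\PP(\cQ_n^d)^*$ the point $[\mathbf f]$ corresponds to this kernel, so ${\mathbb \Phi}(f)=[W_f]=\varphi_f(f)$.

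The step I expect to be the crux is the last one: verifying that $\mathbf f$, once moved to $\cQ_n^{d*}$ via the polar pairing, really has $W_f$ as its kernel. This is exactly where the residue (Macaulay inverse system) description of the associated form is indispensable, as it is what forces vanishing on the Jacobian ideal; by contrast the tangent-space computation and the bookkeeping between the polar-pairing and projective-duality identifications are routine.
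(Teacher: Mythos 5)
Your proposal is correct and follows essentially the same route as the paper: your tangent-space computation (the infinitesimal $\mathfrak{gl}_n$-action spanning $z_jf_i$, which equals $W_f$ when $n(d-2)=d$) is exactly the paper's Lemma \ref{twomaps}, and your identification of $\mathbf{f}$ with the socle functional $g\mapsto\omega([g])$ killing precisely $W_f$ is the content of the paper's Lemma \ref{PhiPsi}, where the same fact is obtained by citing the Macaulay inverse system property from \cite{AI1}. The only cosmetic difference is your initial reduction to a single point by equivariance, which is harmless but unnecessary, since the pointwise computation applies verbatim to every element of $\XX_n^d$.
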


This theorem provides a clear explanation of the duality for orbits of binary quartics and ternary cubics that we observed earlier in this section. Indeed, let first $n=2$, $d=4$. Then the theorem yields that for $t\ne 0,\pm 6$ one has $\overline{O(q_t)}^{\,*}\simeq\overline{O(q_{-12/t})}$ and $\overline{O}_3^{\,*}\simeq\overline{O}_1$. By reflexivity it then follows that $\overline{O}_1^{\,*}\simeq\overline{O}_3$. However, since $O_1$ is not a hypersurface, there is no natural map from $\overline{O}_1$ to its dual. This fact corresponds to the impossibility to extend ${\mathbb \Phi}$ equivariantly to $O_1$.

Analogously, for $n=3$, $d=3$, the theorem implies that for $t\ne 0$ and $t^3\ne 216$ we have $\overline{O(c_t)}^{\,*}\simeq\overline{O(c_{-18/t})}$ and $\overline{{\rm O}}_4^{\,*}\simeq\overline{{\rm O}}_1$. By reflexivity one then has $\overline{{\rm O}}_1^{\,*}\simeq\overline{{\rm O}}_4$. Again, since ${\rm O}_1$ is not a hypersurface, there is no natural map from $\overline{{\rm O}}_1$ to its dual. This agrees with the nonexistence of an equivariant extension of ${\mathbb \Phi}$ to ${\rm O}_1$.

\subsection{Proof of Theorem \ref{main}}\label{proofmain}

First, let $n\ge 2$ and $d\ge 3$ be arbitrary. For a complex vector space $W$, let $\Gr(k,W)$ denote the Grassmannian of $k$-dimensional subspaces of $W$. Notice that $\Gr(\dim_{\CC}W-1,W)$ coincides with $\PP(W)^*$. It follows, for instance, from Corollary 3.3 in \cite{St} (see also the proof of Lemma \ref{twomaps} below), that for any $f\in\XX_n^d$ the dimension of the subspace of $\cQ_n^d$ spanned by the forms $z_if_j$, with $i,j=1,\dots,n$, is equal to $n^2$. 

We then define two maps from $\XX_n^d$ to $\Gr(n^2,\cQ_n^d)$ as
\begin{equation}
\psi_1:f\mapsto \cQ_n^1 f_1+\dots+\cQ_n^1f_n\subset\cQ_n^d\label{mappsi}
\end{equation}
and
\begin{equation}
\psi_2:f\mapsto T_f(\GL_n\cdot f),\label{mapphismall}
\end{equation}
where in the right-hand sides the element $f$ of $\XX_n^d$ is regarded as a form in $X_n^d$ and $T_f(\GL_n\cdot f)$ as a subspace of $\cQ_n^d$. We will now show that these two maps are in fact equal.

\begin{lemma}\label{twomaps} \it One has $\psi_1=\psi_2$.
\end{lemma}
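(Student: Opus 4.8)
The plan is to prove the equality of subspaces $\psi_1(f) = \psi_2(f)$ for each fixed $f \in \XX_n^d$, regarded as a genuine element of $X_n^d$. Since both sides are declared to be $n^2$-dimensional subspaces of $\cQ_n^d$ (the dimension for $\psi_1(f)$ being exactly the span of the $z_if_j$ asserted in the paragraph preceding the lemma, and that for $\psi_2(f)$ being the tangent dimension of a closed $\GL_n$-orbit of a stable point, which is $n^2$ by the stability noted in Section \ref{setup}), it suffices to prove one inclusion, say $\psi_2(f) \subseteq \psi_1(f)$, and equality of dimensions then forces $\psi_1(f) = \psi_2(f)$.

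First I would compute the tangent space $T_f(\GL_n \cdot f)$ explicitly. The orbit map $\GL_n \to \cQ_n^d$, $C \mapsto C \cdot f$, has differential at the identity obtained by differentiating $(C\cdot f)(z) = f(C^{-1} z)$ along one-parameter subgroups $C = \exp(sA)$, $A \in \mathfrak{gl}_n = T_{\id}\GL_n$. A direct calculation gives
\begin{equation}
\left.\frac{d}{ds}\right|_{s=0}\!\bigl(\exp(sA)\cdot f\bigr)(z) = -\sum_{i,j} a_{ij}\, z_j\, \frac{\partial f}{\partial z_i}(z),\label{tangentcomp}
\end{equation}
so the tangent space $T_f(\GL_n \cdot f)$ is precisely the linear span of all the forms $z_j f_i = z_j\,(\partial f/\partial z_i)$ with $1 \le i,j \le n$. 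This is exactly the subspace whose dimension is asserted to be $n^2$ in the paragraph before the lemma (via Corollary 3.3 in \cite{St}).

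The remaining step is to identify this span with $\psi_1(f) = \cQ_n^1 f_1 + \dots + \cQ_n^1 f_n$. But $\cQ_n^1$ is spanned by the linear coordinate forms $z_1,\dots,z_n$, so $\cQ_n^1 f_j$ is spanned by $\{z_i f_j : i = 1,\dots,n\}$, and therefore $\psi_1(f)$ is spanned by all $z_i f_j$ with $i,j = 1,\dots,n$ — the same set of generators as the span appearing in \eqref{tangentcomp}. Hence $\psi_1(f) = T_f(\GL_n\cdot f) = \psi_2(f)$ verbatim, with no dimension-counting even needed once the tangent computation is in hand; the dimension claim serves only as a consistency check and as the input from \cite{St} guaranteeing that the common value is the full $n^2$.

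I do not anticipate a genuine obstacle here, since the content is essentially the standard identification of the tangent space to a $\GL_n$-orbit with the image of the Lie algebra under the infinitesimal action. The one point requiring care is the sign and index bookkeeping in \eqref{tangentcomp}: because the action is through $C^{-1}$, the chain rule produces a minus sign and transposes the roles of the row/column indices of $A$, but since $A$ ranges over all of $\mathfrak{gl}_n$ these features are immaterial to the span. I would state \eqref{tangentcomp} cleanly and then immediately read off that both $\psi_1(f)$ and $\psi_2(f)$ are the span of $\{z_i f_j\}_{i,j=1}^n$, invoking \cite{St} only to record that this span has the expected dimension $n^2$.
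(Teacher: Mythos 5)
Your proposal is correct and follows essentially the same route as the paper: both compute the differential of the orbit map $C\mapsto C\cdot f$ at the identity of $\GL_n$, find that it sends a Lie algebra element to the corresponding combination of the forms $-z_jf_i$, and conclude that the tangent space $T_f(\GL_n\cdot f)$ is exactly the span $\cQ_n^1f_1+\dots+\cQ_n^1f_n$. Your preliminary dimension-counting framing is superfluous (as you yourself note), since the direct identification of the two spans already gives the equality.
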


\begin{proof}
For $f\in X_n^d$ let $\sigma_f:\GL_n\to\cQ_n^d$ be the morphism defined by $\sigma_f(C)=C\cdot f$. Then the tangent space $T_f(\GL_n\cdot f)$ is the image of the differential $d\sigma_f(e)$ of $\sigma_f$ at the identity element $e\in\GL_n$. 

Let $\{{\tt E}_{ij}\}$ be the standard basis in the Lie algebra ${\mathfrak{gl}}_n$ of $\GL_n$, where ${\tt E}_{ij}$ is the matrix whose $(i,j)$th element is 1 and all other elements are zero. Then, if we regard $d\sigma_f(e)$ as a linear transformation from ${\mathfrak{gl}}_n$ to $\cQ_n^d$, it is easy to compute that it maps ${\tt E}_{ij}$ to $-z_jf_i$. This shows that $\psi_1$ and $\psi_2$ indeed coincide as required. \end{proof} 

Next, consider the map
\begin{equation} \label{eqn-psi}
{\mathbb \Psi} \co \XX_n^d\to\PP(\cQ_n^{n(d-2)})^*,\quad f\mapsto W_f,
\end{equation}
where $W_f$ is the hyperplane in $\cQ_n^{n(d-2)}$ defined in \eqref{subspace}. We will now relate the morphism ${\mathbb \Phi}$ to this map. 

\begin{lemma} \label{PhiPsi} \it The morphisms
$$
{\mathbb \Phi}  \co \XX_n^d\to\PP(\cQ_n^{n(d-2)*})\,\,\hbox{and}\,\,\, {\mathbb \Psi} \co \XX_n^d\to\PP(\cQ_n^{n(d-2)})^*
$$
coincide upon the canonical identification $\PP(\cQ_n^{n(d-2)*})=\PP(\cQ_n^{n(d-2)})^*$.
\end{lemma}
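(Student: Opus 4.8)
The plan is to show that, under the polar pairing, the linear functional on $\cQ_n^{n(d-2)}$ determined by the associated form ${\mathbf f}$ has kernel exactly equal to the hyperplane $W_f$. Recall that the canonical identification $\PP(\cQ_n^{n(d-2)*})=\PP(\cQ_n^{n(d-2)})^*$ sends a nonzero functional to its kernel hyperplane. Thus ${\mathbb \Phi}(f)={\mathbf f}$, viewed as a point of $\PP(\cQ_n^{n(d-2)})^*$, is precisely $\ker{\mathbf f}$, and it suffices to prove the equality $\ker {\mathbf f}=W_f$ (together with ${\mathbf f}\ne 0$, which is anyway built into the definition of the morphism ${\mathbb \Phi}$). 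Since $W_f$ is a hyperplane by the discussion in Section \ref{setup}, establishing this containment-hence-equality will immediately give ${\mathbb \Phi}(f)={\mathbb \Psi}(f)$ for every $f$, and therefore ${\mathbb \Phi}={\mathbb \Psi}$.

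The heart of the argument is a single identity: for every $g\in\cQ_n^{n(d-2)}$ one has
$$
\langle {\mathbf f},g\rangle = (n(d-2))!\,\omega(\bar g),
$$
where $\bar g$ denotes the image of $g$ in $M_f$. This makes sense because $\deg g = n(d-2)=\nu$, so $\bar g$ lands in the top graded piece $\fm^{\nu}=\Soc(M_f)$, on which $\omega$ is defined. I would prove it by unwinding the definition of ${\mathbf f}$: write ${\mathbf f}$ in the monomial expansion \eqref{assocformexpp}, with coefficients $\tfrac{\nu!}{i_1!\cdots i_n!}\mu_{i_1,\dots,i_n}(f)$, and evaluate the polar pairing against a monomial $g=z^{j}$ with $|j|=\nu$. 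The pairing is \emph{diagonal} in the monomial basis — realized through apolarity, the operator $\partial^{i}$ annihilates $z^{j}$ unless $i=j$ — so only the single coefficient $\mu_{j}(f)$ survives and the factorials combine to give $\langle {\mathbf f},z^{j}\rangle = \nu!\,\mu_{j}(f)$. On the other hand, by \eqref{assocformexpppp} together with $\omega(H(f))=1$ we have $\omega(\overline{z^{j}})=\omega({\mathbf z}_1^{j_1}\cdots{\mathbf z}_n^{j_n})=\mu_{j}(f)$, and the identity follows by linearity. (One can also bypass this computation by invoking the fact recalled in Remark \ref{invsys} that ${\mathbf f}$ is a Macaulay inverse system of $M_f$, which is equivalent to the statement that the apolar annihilator of ${\mathbf f}$ is the Jacobian ideal $(f_1,\dots,f_n)$; this yields the same conclusion about $\ker{\mathbf f}$.)

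With the identity in hand the conclusion is immediate. Since $\omega$ is an isomorphism of the one-dimensional space $\Soc(M_f)$, one has $\omega(\bar g)=0$ if and only if $\bar g = 0$ in $M_f$, i.e.\ if and only if $g$ lies in the degree-$\nu$ graded piece of the Jacobian ideal $(f_1,\dots,f_n)$. Because the $f_j$ are homogeneous of degree $d-1$, this graded piece is exactly $\sum_j \cQ_n^{n(d-2)-d+1}f_j = W_f$. Hence $\ker{\mathbf f}=W_f$. Finally ${\mathbf f}\ne 0$, since $\langle{\mathbf f},H(f)\rangle = \nu!\,\omega(H(f)) = \nu!\ne 0$, so $H(f)\notin\ker{\mathbf f}$, consistent with $H(f)$ spanning a line complementary to $W_f$. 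This gives ${\mathbb \Phi}={\mathbb \Psi}$.

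The main obstacle I anticipate is purely bookkeeping: matching the multinomial factors $\tfrac{\nu!}{i_1!\cdots i_n!}$ appearing in ${\mathbf f}$ against the factorials produced by the apolarity pairing, so that $\langle{\mathbf f},z^{j}\rangle$ comes out genuinely proportional to $\mu_{j}(f)$ with a nonzero constant independent of $g$. Fortunately, the precise value of that constant is irrelevant for the projectivized statement: all that is needed is the diagonality (and nondegeneracy) of the pairing, which alone forces $\ker{\mathbf f}$ to be cut out by the vanishing of $\bar g$. Consequently any consistent normalization of the polar pairing produces the same kernel $W_f$, and the lemma is insensitive to the choice of convention.
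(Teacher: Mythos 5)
Your proof is correct, and it takes a more self-contained route than the paper's. Both arguments share the same skeleton: under the polar pairing, ${\mathbb \Phi}(f)$ is the hyperplane of forms $g\in\cQ_n^{n(d-2)}$ annihilated by ${\mathbf f}$, and the whole point is to show that this hyperplane equals $W_f$. The paper settles that point by citation: by Corollary 3.3 of \cite{AI1}, the form $\Phi(f)$, regarded as an element of $\cQ_n^{n(d-2)}$, is a Macaulay inverse system of $M_f$, so $\Ann(\Phi(f))=(f_1,\dots,f_n)$ and hence $W_f=\Ann(\Phi(f))\cap\cQ_n^{n(d-2)}$ is exactly the set of forms paired to zero against ${\mathbf f}$. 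You instead prove from the definitions the identity $\langle{\mathbf f},g\rangle=(n(d-2))!\,\omega(\bar g)$, using \eqref{assocformexpppp}, \eqref{assocformexpp} and the diagonality of the apolar pairing on monomials; your bookkeeping is right, since $\langle{\mathbf f},z_1^{i_1}\cdots z_n^{i_n}\rangle=(n(d-2))!\,\mu_{i_1,\dots,i_n}(f)=(n(d-2))!\,\omega\bigl(\overline{z_1^{i_1}\cdots z_n^{i_n}}\bigr)$, and, as you observe, the constant is immaterial after projectivization. That identity is precisely the top-degree graded piece of the inverse-system property --- the only piece the lemma actually needs --- so in effect you reprove the relevant sliver of the result in \cite{AI1} rather than quoting it. What each approach buys: yours is elementary and independent of the external reference; the paper's is shorter modulo the citation and places the lemma squarely in the inverse-systems framework emphasized in Remark \ref{invsys}. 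Your parenthetical alternative (invoking the inverse-system fact directly) is literally the paper's argument.
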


\begin{proof}
By formula \eqref{assocformexpp}, for $f\in\XX_n^d$ we see that ${\mathbb \Phi}(f)$ is the hyperplane in $\cQ_n^{n(d-2)}$ that consists of all forms
\begin{equation}
g=\sum_{i_1+\dots+i_n=n(d-2)}\frac{(n(d-2))!}{i_1!\dots i_n!}a_{i_1,\dots,i_n}z_1^{i_1}\dots z_n^{i_n}\label{formg}
\end{equation}
satisfying the condition
\begin{equation}
\sum_{i_1+\dots+i_n=n(d-2)}\frac{1}{i_1!\dots i_n!}\mu_{i_1,\dots,i_n}(f)a_{i_1,\dots,i_n}=0,\label{condPhi}
\end{equation}
where $f$ is regarded as an element of $X_n^d$ and $\mu_{i_1,\dots,i_n}(f)$ are the coefficients from \eqref{assocformexpp}.

On the other hand, by Corollary 3.3 of \cite{AI1}, considering $f$ as a form in $X_n^d$ and $\Phi(f)$ as an element of $\cQ_n^{n(d-2)}$ (see Remark \ref{olddef}), we have that $\Phi(f)$ is a Macaulay inverse system for the Milnor algebra $M_f$. This means that the ideal
$(f_1,\dots,f_n)$ in $\CC[z_1,\dots,z_n]$ coincides with $\Ann(\Phi(f))$, where for any polynomial $h\in\CC[z_1,\dots,z_n]$ the annihilator $\Ann(h)$ of $h$ is defined as
$$
\Ann(h):=\left\{p\in \CC[z_1,\dots,z_n] \mid p\left(\frac{\partial}{\partial z_1},\dots,\frac{\partial}{\partial z_n}\right)(h)=0\right\}.
$$
Therefore, $W_f=\Ann(\Phi(f))\cap\cQ_n^{n(d-2)}$, which immediately implies that $W_f$ consists of all forms $g\in\cQ_n^{n(d-2)}$ as in \eqref{formg} satisfying \eqref{condPhi}. This shows ${\mathbb \Phi}={\mathbb \Psi}$.
\end{proof}

\begin{proof}[Proof of Theorem {\rm \ref{main}}] In the cases $n=2$, $d=4$ and $n=3$, $d=3$ we have $\Gr(n^2,\cQ_n^d)=\PP(\cQ_n^d)^*$, and   Lemma \ref{twomaps} shows that the two morphisms $\psi_1, \psi_2: \XX_n^d \to \PP(\cQ_n^d)^*$ defined in \eqref{mappsi} and \eqref{mapphismall} are equal. Further, $\psi_1 = {\mathbb \Psi}$ by \eqref{mappsi} and \eqref{eqn-psi}, hence Lemma \ref{PhiPsi} implies $\psi_1 = {\mathbb \Phi}$, which yields $\psi_2={\mathbb \Phi}$.  Moreover, for every $f\in\XX_n^d$ the map $\varphi_f\big|_{O(f)}$ from \eqref{mapvarphismall} is identical to $\psi_2\big|_{O(f)}$. It then follows that ${\mathbb \Phi}\big|_{O(f)}$ and $\varphi_f\big|_{O(f)}$ coincide for all $f\in\XX_n^d$, which establishes the theorem.
\end{proof}

\section{The contravariants defined by $\Phi$}\label{S:contravariant}
\setcounter{equation}{0}

In this section, we give an algebraic description of the map ${\mathbb \Phi}$ for binary quartics and ternary cubics, which utilizes classical covariants and contravariants. Such descriptions can be produced in other situations as well, and, in order to further illustrate our method, we also discuss the case of binary quintics. 

\subsection{Covariants and contravariants}
Recall that a polynomial\linebreak $\Gamma \in\CC[\cQ_n^d\times\CC^n]$ is said to be a covariant of forms in $\cQ_n^d$ if for all $f\in\cQ_n^d$, $z\in\CC^n$ and $C\in\GL_n$ the following holds:
$$
\Gamma(f,z)=(\det C)^k\, \Gamma(C\cdot f,C\cdot z),
$$
where $k$ is an integer called the weight of\, $\Gamma$. Every homogeneous component of\, $\Gamma$ with respect to $z$ is automatically homogeneous with respect to $f$ and is also a covariant. Such covariants are called homogeneous and their degrees with respect to $f$ and $z$ are called the degree and order, respectively.  

We may consider a homogenous covariant $\Gamma$ of degree $K$ and order $D$ as the $\SL_n$-equivariant morphism
$$
\begin{aligned}
\cQ_n^d & \to \cQ_n^D,\\
 f & \mapsto (z \mapsto \Gamma(f,z))
 \end{aligned}$$
 of degree  $K$ with respect to $f$, which maps a form $f\in\cQ_n^d$ to the form in $\cQ_n^D$ whose evaluation at $z$ is $\Gamma(f,z)$.  We will abuse notation by using the same symbol to denote both an element in $\CC[\cQ_n^d \times \CC^n]$ and the corresponding morphism $ \cQ_n^d \to \cQ_n^D$. Also, we write $\Gamma(f)$ for the form  $z \mapsto \Gamma(f,z)$.
 
Covariants independent of $z$ (i.e.~of order $0$) are called relative invariants. For example, the pairs of functions $I_2$, $I_3$ and ${\rm I}_4$, ${\rm I}_6$ introduced in Section \ref{result} are relative invariants of binary quartics and ternary cubics, respectively. Also, note that the discriminant $\Delta$ is a relative invariant of forms in $\cQ_n^d$ of weight $d(d-1)^{n-1}$ (see Chapter 13 in \cite{GKZ}).

Analogously, a polynomial $\Lambda \in\CC[\cQ_n^d\times \CC^{n*}]$ is said to be a contravariant of forms in $\cQ_n^d$  if for all $f\in\cQ_n^d$, $z^*=(z_1^*,\dots,z_n^*) \in \CC^{n*}$ and $C\in\GL_n$ one has
$$
\Lambda(f,z^*)=(\det C)^k\, \Lambda(C\cdot f,C \cdot z^*),
$$
where $k$ is a (nonnegative) integer called the weight of\, $\Lambda$ and
$$
C \cdot z^*:= (z_1^*,\dots,z_n^*)\,C^{-1}.
$$
Again, every contravariant splits into a sum of homogeneous ones, and for a homogeneous contravariant its degrees with respect to $f$ and $z^*$ are called the degree and class, respectively. 

We may consider a homogenous contravariant $\Lambda$ of degree $K$ and class $D$ as the $\SL_n$-equivariant morphism
$$
\begin{aligned}
\cQ_n^d & \to \Sym^D(\CC^n),\\
 f & \mapsto (z^* \mapsto \Lambda(f,z^*))
\end{aligned}
$$
of degree $K$ with respect to $f$. Upon the standard identification\linebreak $\Sym^D(\CC^n)=(\Sym^D \CC^{n*})^*=\cQ_n^{D*}$ induced by the polar pairing, this morphism can be regarded as a map from $\cQ_n^d$ to $\cQ_n^{D*}$. As above, we will abuse notation by using the same symbol to denote both an element in $\CC[\cQ_n^d \times \CC^{n*}]$ and the corresponding morphisms $\cQ_n^d \to \Sym^D(\CC^n)$, $\cQ_n^d \to \cQ_n^{D*}$. Also, we write $\Lambda(f)$ for both the element of $\Sym^D(\CC^n)$ and the element of $\cQ_n^{D*}$ arising from $f$ and $\Lambda$.

If $n=2$, every homogeneous contravariant $\Lambda$ yields a homogenous covariant $\hat{\Lambda}$ via the formula
\begin{equation}
\hat{\Lambda}(f)(z_1,z_2) := \Lambda(f)(-z_2, z_1),\quad (z_1,z_2)\in\CC^2,\label{relcovcontrav}
\end{equation}
where $(-z_2, z_1)$ is viewed as a point in $\CC^{2*}$. Analogously, every homogeneous covariant $\Gamma$ gives rise to a homogenous contravariant $\tilde{\Gamma}$ via the formula
\begin{equation}
\tilde{\Gamma}(f)(z_1^*,z_2^*) := \Gamma(f)(z_2^*, -z_1^*),\quad (z_2^*,-z_1^*)\in\CC^{2*},\label{relcovcontrav1}
\end{equation}
where $(z_2^*,-z_1^*)$ is regarded as a point in $\CC^{2}$. Under these correspondences the degree and order of a covariant translate into the degree and class of the corresponding contravariant and vice versa.

\subsection{The contravariant defined by $\Phi$}
Recall that the morphism $\Phi$ is a map
$$
\Phi \co X_n^d \to \cQ_n^{n(d-2)*}
$$
defined on the locus $X_n^d$ of nondegenerate forms.  The coefficients $\mu_{i_1, \ldots, i_n}$ that determine $\Phi$ (see \eqref{assocformexpppp}, (\ref{assocformexpp})) are elements of the coordinate ring $\CC[X_n^d] = \CC[\cQ_n^d]_{\Delta}$.  Let $p_{i_1,\dots,i_n}$ be the minimal integer such that $\Delta^{p_{i_1,\dots,i_n}}\cdot\mu_{i_1,\dots,i_n}$ is a regular function on ${\mathcal Q}_n^d$ (see formula \eqref{formulaformus}) and
$$
p=\max\{p_{i_1,\dots,i_n}\mid i_1+\dots+i_n=n(d-2)\}.
$$
Then the product $\Delta^p \Phi$ defines the following morphism
$$
\Delta^p \Phi \co \cQ_n^d \to \cQ_n^{n(d-2)*}, \quad f \mapsto \Delta(f)^p \Phi(f),
$$
which is a contravariant of weight $pd(d-1)^{n-1}-2$ by Proposition \ref{equivariance}.  Since the class of $\Delta^p \Phi$ is $n(d-2)$, it follows that its degree is equal to $np(d-1)^{n-1}-n$. Notice that this last formula implies $p>0$ as the degree of a contravariant is always nonnegative.

In this subsection, we show that for binary and ternary forms one has $p=1$. It then follows that for $n=2,3$ the product $\Delta \Phi \co \cQ_n^d \to\cQ_n^{n(d-2)*}$ is a contravariant of degree $n(d-1)^{n-1}-n$.  In Subsections \ref{S:contra1}--\ref{S:contra3}, we study this contravariant explicitly for binary quartics, binary quintics and ternary cubics in terms of well-known classical contravariants.

\begin{proposition}\label{moreprecise} 
If $n=2,3$ and $d \ge 3$, then $p=1$.
\end{proposition}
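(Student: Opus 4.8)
The plan is to prove $p\le 1$, since $p\ge 1$ was already noted (the degree $np(d-1)^{n-1}-n$ of $\Delta^p\Phi$ must be nonnegative). As $\Delta$ is an irreducible reduced polynomial on $\cQ_n^d$ (Chapter 13 in \cite{GKZ}), the locus $\{\Delta=0\}$ is an irreducible divisor, and $p_{i_1,\dots,i_n}$ is exactly the order of the pole of $\mu_{i_1,\dots,i_n}$ along it; so it suffices to bound each such order by $1$. The first step is to rewrite the coefficients $\mu_{i_1,\dots,i_n}$ as Grothendieck residues. Using the integral description of the associated form recalled in the introduction (see also \cite{AI2}) together with the classical fact that the residue of the Jacobian of an isolated map germ equals its multiplicity, one has $\operatorname{res}_0\!\left(H(f)\,dz_1\wedge\cdots\wedge dz_n/(f_1\cdots f_n)\right)=\dim_{\CC}M_f=(d-1)^n$, a nonzero constant independent of $f$. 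Since $\mathbf z_1^{i_1}\cdots\mathbf z_n^{i_n}=\mu_{i_1,\dots,i_n}(f)\,H(f)$ in $\Soc(M_f)$ by \eqref{assocformexpppp}, applying $\operatorname{res}_0$ yields
\begin{equation*}
\mu_{i_1,\dots,i_n}(f)=\frac{1}{(d-1)^n}\,\operatorname{res}_0\!\left(\frac{z_1^{i_1}\cdots z_n^{i_n}\,dz_1\wedge\cdots\wedge dz_n}{f_1\cdots f_n}\right),
\end{equation*}
so the pole order of $\mu_{i_1,\dots,i_n}$ equals that of this residue.

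The second step is to identify the divisor carrying these poles with $\{\Delta=0\}$ itself, to first order. For $f\in X_n^d$ the origin is the only common zero of the partials $f_1,\dots,f_n$, and the residue above, regarded as a rational function of the coefficients of $f$, is regular precisely where $f_1,\dots,f_n$ have no further common projective zero, i.e.\ on the locus where the resultant $\operatorname{Res}(f_1,\dots,f_n)$ of $n$ forms of degree $d-1$ in $n$ variables does not vanish. Now $\operatorname{Res}(f_1,\dots,f_n)$ vanishes exactly when $\{f=0\}\subset\PP^{n-1}$ is singular, i.e.\ exactly on $\{\Delta=0\}$, and it has degree $n(d-1)^{n-1}$ in the coefficients of $f$, the same as $\deg\Delta$. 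Since $\Delta$ is irreducible, it follows that $\operatorname{Res}(f_1,\dots,f_n)=c\,\Delta(f)$ for a nonzero constant $c$; in particular the pole order along $\{\Delta=0\}$ is measured by the order of vanishing of the resultant.

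It therefore remains to show that the residue above has at most a simple pole along $\{\operatorname{Res}(f_1,\dots,f_n)=0\}$, i.e.\ that $\operatorname{Res}(f_1,\dots,f_n)\cdot\operatorname{res}_0(\,\cdot\,)$ extends to a polynomial in the coefficients of $f$. This is the heart of the matter and the step I expect to be the main obstacle. The natural route is the resultant--residue formalism: for $n$ homogeneous forms of degree $d-1$ in $n$ variables with the origin as their only common zero, and for a numerator of the critical degree $n(d-2)=\sum_j(d_j-1)$ (which is exactly $i_1+\dots+i_n$ here), the global Grothendieck residue equals a polynomial in the coefficients divided by the resultant to the first power. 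I would establish this by realizing the numerator explicitly through the Bezoutian, as a cofactor-type polynomial; the restriction to binary and ternary forms ($n=2,3$) is what makes this verification tractable and guarantees that the resultant enters only linearly. Granting this, $\operatorname{res}_0(\,\cdot\,)=(\text{polynomial})/(c\,\Delta)$, so $p_{i_1,\dots,i_n}\le 1$ for every multi-index, whence $p=\max\{p_{i_1,\dots,i_n}\}\le 1$. Together with $p\ge 1$ this gives $p=1$.
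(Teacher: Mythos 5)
Your route is genuinely different from the paper's, and its first two steps are sound: the identity $\mu_{i_1,\dots,i_n}(f)=\operatorname{res}_0\bigl(z_1^{i_1}\cdots z_n^{i_n}\,dz_1\wedge\cdots\wedge dz_n/(f_1\cdots f_n)\bigr)/(d-1)^n$ is correct (the residue annihilates the ideal $(f_1,\dots,f_n)$, and the residue of $H(f)$, which is the Jacobian of the map $(f_1,\dots,f_n)$, equals $\dim_{\CC}M_f=(d-1)^n$), and so is the identification $\Res(f_1,\dots,f_n)=c\,\Delta(f)$, by irreducibility of $\Delta$ and equality of degrees. But your argument stops exactly at the point you yourself flag as the heart of the matter: you never establish that $\Res\cdot\operatorname{res}_0(\,\cdot\,)$ is a polynomial in the coefficients for numerators of critical degree; you only name a formalism (the Bezoutian) through which you \emph{would} establish it. As written, the proof is conditional on an unproven lemma, which is a genuine gap. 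Moreover, your stated reason for the restriction to $n=2,3$ --- that it ``guarantees that the resultant enters only linearly'' --- is not correct: the denominator statement you need (for a numerator of critical degree $\sum_j(\deg f_j-1)$, the residue of a system of forms whose only common zero is the origin equals a polynomial divided by the \emph{first} power of the resultant) is a theorem of elimination theory valid for every $n$; it is part of Jouanolou's residue--resultant formalism and appears in the work of Cattani, Dickenstein and Sturmfels on residues and resultants. So the hypothesis $n=2,3$ plays no role in your argument at all; if you close the gap by proving or properly invoking that theorem, you in fact obtain $p=1$ for all $n\ge 2$, $d\ge 3$, which is strictly stronger than the proposition.

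By contrast, the paper's proof is elementary and is where the hypothesis $n=2,3$ genuinely enters. It expands an arbitrary monomial of degree $n(d-2)$ in a basis of $W_f$ from \eqref{subspace} together with $H(f)$, so that $\mu_{i_1,\dots,i_n}(f)$ appears as one unknown of a linear system; by Cramer's rule the minimal denominator of $\mu_{i_1,\dots,i_n}$ divides the determinant $D(f)$ of that system, a polynomial of degree $\delta_1=\dim_{\CC}\cQ_n^{n(d-2)}+n-1$. Since this minimal denominator is $\Delta^{p_{i_1,\dots,i_n}}$ with $\Delta$ irreducible of degree $\delta_2=n(d-1)^{n-1}$, the inequality $\delta_1<2\delta_2$ forces $p_{i_1,\dots,i_n}\le 1$; this inequality holds for $n=2,3$ but fails already for $n=4$, $d=7$ (Remark \ref{quarternary}), which is why the paper's statement is limited to binary and ternary forms. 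In short: your approach, once the residue--resultant lemma is supplied, buys a more structural and more general result; the paper's buys a short, self-contained argument at the price of the dimension restriction.
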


\begin{proof} First, let $n$ be arbitrary. Recall that for $f\in X_n^d$ the subspace $W_f$ introduced in \eqref{subspace} has codimension 1, and the line spanned by the Hessian $H(f)$ is complementary to it in the vector space $\cQ_n^{n(d-2)}$. Let $K:=\dim_{\CC}{\mathcal Q}_n^{n(d-2)-d+1}$ and ${\tt m}_1,\dots,{\tt m}_K$ be the standard monomial basis in ${\mathcal Q}_n^{n(d-2)-d+1}$. Then $W_f$ is spanned by the products $f_i\,{\tt m}_j$,\linebreak $i=1,\dots,n$, $j=1,\dots,K$. Choose a basis ${\mathbf e}_k(f):= f_{i_{{}_k}}\,{\tt m}_{j_{{}_k}}$ in $W_f$, with $k=1,\dots,N-1$, where $N:=\dim_{\CC}\cQ_n^{n(d-2)}$. We note that the indices $i_k$, $j_k$ can be assumed to be independent of the form $f$ if it varies in some Zariski open subset $U$ of $X_n^d$, and from now on we assume that this is the case. Then for every $g\in\cQ_n^{n(d-2)}$ there are $\alpha_k(f,g)\in\CC$, with $i=1,\dots,N-1$, and $\gamma(f,g)\in\CC$ such that
\begin{equation}
\alpha_1(f,g){\mathbf e}_1(f)+\dots+\alpha_{N-1}(f,g){\mathbf e}_{N-1}(f)+\gamma(f,g) H(f)=g.\label{keyeq}
\end{equation}
Notice that $\gamma(f,z_1^{i_1}\dots z_n^{i_n})=\mu_{i_1,\dots,i_n}(f)$ (see (\ref{assocformexpppp})).

We now expand both sides of \eqref{keyeq} with respect to the standard monomial basis of $\cQ_2^{2(d-2)}$. As a result, we obtain a linear system of $N$ equations with the $N$ unknowns $\alpha_k(f,g)$, $\gamma(f,g)$, $k=1,\dots,N-1$. Let $A(f)$ be the matrix of this system and $D(f):=\det A(f)$. Clearly, the entries of the first $N-1$ columns of the matrix are linear functions of the coefficients of the form $f$, whereas the entries of the $N$th column are homogeneous polynomials of degree $n$ of these coefficients. Therefore, $D$ is a homogeneous polynomial of degree $\delta_1:=N+n-1$ on $\cQ_n^d$. Furthermore, since for every $f$ and $g$ the system has a solution, $D$ does not vanish on $U$, and the solution can be found by applying Cramer's rule. It then follows that the degree of the minimal denominator of $\mu_{i_1,\dots,i_n}$ does not exceed $\delta_1$.

At the same time, $\Delta^{p_{i_1,\dots,i_n}}\cdot\mu_{i_1,\dots,i_n}$ is a regular function on ${\mathcal Q}_n^d$ (recall that $p_{i_1,\dots,i_n}$ is the minimal integer with this property). It is well-known that $\Delta$ is an irreducible homogeneous polynomial of degree\linebreak $\delta_2:=n(d-1)^{n-1}$ on ${\mathcal Q}_n^d$ (the irreducibility of $\Delta$ can be observed by considering an incidence variety as on p.~169 in \cite{Mu}). Therefore, the degree of the minimal denominator of $\mu_{i_1,\dots,i_n}$ is $p_{i_1,\dots,i_n}\cdot\delta_2$. 

We will now show that for $n=2$ and $n=3$ one has
\begin{equation}
\delta_1<2\delta_2.\label{12ineq}
\end{equation}
Indeed, using \eqref{dimform}, we obtain
$$
\delta_1=\frac{(n(d-1)-1)!}{(n-1)!(n(d-2))!}+n-1,
$$
which yields
$$
\delta_1=\left\{\begin{array}{ll} 2d-2, & \hbox{if $n=2$,}\\
\vspace{-0.1cm}\\
\displaystyle\frac{9d^2-27d+24}{2}, & \hbox{if $n=3$.}
\end{array}\right.
$$
Then for $n=2$ we see $\delta_1=\delta_2$, and after some calculations it follows that for $n=3$ inequality \eqref{12ineq} holds. This implies $p_{i_1,\dots,i_n}\le 1$, and the proof is complete.\end{proof}

\begin{remark}\label{numberofvar}\rm For $n=2$ in the above proof one has $2K=N-1$, hence $D(f)$ does not vanish for all $f\in X_2^d$. In other words, 
$$
\{f\in{\mathcal Q}_2^d: D(f)= 0\}\subset \{f\in{\mathcal Q}_2^d: \Delta(f)= 0\}.\label{zerosets}
$$
Furthermore, in this case $\delta_1=\delta_2=2d-2$. It then follows that $D$ and $\Delta$ coincide up to a scalar factor. Thus, an interesting byproduct of the proof of Proposition \ref{moreprecise} is the fact that for any binary form $f$ the discriminant of $f$ can be computed as the determinant of $A(f)$ up to scale.
\end{remark}

\begin{remark}\label{quarternary}\rm It is not hard to see that inequality \eqref{12ineq} also holds for $n=4$ and $d\le 6$, with $n=4$, $d=7$ being the first case when it fails. In fact, arguing as in the proof of Proposition \ref{moreprecise}, one can derive a certain estimate in terms of $n$ and $d$ on the power of $\Delta$ that can occur in the minimal denominator of $\mu_{i_1,\dots,i_n}$. 
\end{remark}

\subsection{Binary quartics}\label{S:contra1}
Let $n=2$, $d=4$. In this case $\Delta \Phi$ is a contravariant of weight 10, degree 4 and class 4.  We have the following identity of covariants of weight 6 (see (\ref{relcovcontrav})):
\begin{equation}
\widehat {\Delta \Phi}= \frac{1}{2^7 3^3}I_2 H -  \frac{1}{2^4}I_3{\mathbf{id}},\label{covar1}
\end{equation}
where $H$ is the Hessian, $I_2, I_3$ the invariants of degrees $2,3$, respectively, defined in Subsection \ref{S:binary-quartics}, and ${\mathbf{id}}:f\mapsto f$ the identity covariant. To verify \eqref{covar1}, it is sufficient to check it for the quartics $q_t$ introduced in \eqref{qt}. For these quartics the validity of \eqref{covar1} is a  consequence of formulas \eqref{bfqt}--\eqref{form1}. 

Observe that formula \eqref{covar1} is not a result of mere guesswork; it follows naturally from Proposition \ref{moreprecise} and an explicit description of the algebra of covariants of binary quartics. Indeed, this algebra is generated by $I_2$, $I_3$, the Hessian $H$ (which has degree 2 and order 4), the identity covariant ${\mathbf{id}}$ (which has degree 1 and order 4), and one more covariant of degree 3 and order 6 (see pp.~180--181 in \cite{El}). Therefore $\widehat {\Delta \Phi}$, being a covariant of degree 4 and order 4 by Proposition \ref{moreprecise}, is necessarily a linear combination of $I_2H$ and $I_3{\mathbf {id}}$. The coefficients in the linear combination can be determined by computing $\Delta \Phi$, $I_2H$ and $I_3{\mathbf {id}}$ for particular nondegenerate quartics of simple form.

Formula \eqref{covar1} yields an expression for the morphism $\Phi$, hence ${\mathbb \Phi}$, via $I_2$, $I_3$ and $H$. Namely, for $f\in\XX_2^4$ we obtain
\begin{equation} \label{eqn-quartic}
{\mathbb \Phi}(f) = \frac{1}{2^7 3^3}I_2(f) H(f)(-z_2^*,z_1^*)-\frac{1}{2^4}I_3(f) f(-z_2^*,z_1^*),
\end{equation}
where the right-hand side is viewed as an element of $\PP(\cQ_2^{4*})$. One might hope that formula \eqref{eqn-quartic} provides an extension of ${\mathbb \Phi}$ beyond $\XX_2^4$. However, for $f=z_1^2z_2^2$ the right-hand side of \eqref{eqn-quartic} vanishes, which agrees with the fact, explained in Subsection \ref{S:binary-quartics}, that ${\mathbb \Phi}$ does not have a natural continuation to the orbit $O_1=O(z_1^2z_2^2)$.

In the remainder of this subsection we think of $\Phi$ as a map from $X_2^4$ to $\cQ_2^4$, hence of $\Delta\Phi$ as a self-map of $\cQ_2^4$ (see Remark \ref{olddef}). From formulas \eqref{binary-pullbacks} we then see
\begin{equation} \label{eqn-pullback}
I_2\circ(\Delta \Phi) = \frac{\Delta I_2}{2^8 3^3} \, , \qquad I_3\circ(\Delta \Phi) = -\frac{\Delta^2}{2^{12} 3^6}.
\end{equation}
Furthermore, one can analogously compute
\begin{equation} \label{eqn-involution}
(\Delta \Phi) \circ (\Delta \Phi) =   	
-\frac{I_3 \Delta^2}{ 2^{20} 3^{6}}{\mathbf {id}},
\end{equation}
which verifies the fact, observed in Subsection \ref{S:binary-quartics}, that the rational map $\mathbb{\Phi}$ is an involution. 

We will now derive explicit formulas for the Hessian of the associated form of a binary quartic $f$ and for the associated form of the Hessian of $f$. In effect, we calculate the compositions $H\circ(\Delta\Phi)$ and $(\Delta\Phi)\circ H$. These formulas are interesting in their own right as they provide a better understanding of associated forms and their relation to classical covariants. In particular, they emphasize the difference between $H(f)$ and the associated form of $f$ noted in Remark \ref{invsys}.

By substituting \eqref{covar1} into \eqref{eqn-involution} while appealing to formulas \eqref{eqn-pullback}, we obtain the identity
$$ 
I_2 H\circ(\Delta \Phi) = -\Delta\left[\frac{I_3 \tilde{\mathbf{id}}}{32} + \frac{\Delta \Phi}{2}\right],
$$
where the operation\,\,\, $\tilde{}$\,\,\, is defined as in (\ref{relcovcontrav1}). Applying \eqref{covar1} again, we establish the relation
 \begin{equation}
H\circ(\Delta \Phi) = -\frac{\Delta\tilde H}{2^8 3^3},\label{hofdeltaphi}
\end{equation}
which for every $f\in X_2^4$ leads to the following expression for the Hessian of the associated form ${\mathbf f}$ of $f$:
$$
H({\mathbf f})(z)=-\frac{H(f)(-z_2,z_1)}{2^8 3^3\Delta(f)}. 
$$

Further, by composing each side of formula \eqref{covar1} with $H$ and using the identities
$$
\begin{array}{l}
I_2\circ H = 2^6 3^3 I_2^2,\, I_3\circ H = 2^{10}3^6 I_3^2-2^{9} 3^3 I_2^3,\\
\vspace{-0.3cm}\\
H\circ H = 2^{10}3^6 I_3 {\mathbf {id}} - 2^63^3 I_2 H,
\end{array}
$$
we obtain the relation
\begin{equation}
\begin{array}{l}
(\Delta\Phi)\circ H =2^93^6I_2^2I_3\tilde{\mathbf{id}}-2^63^6I_3^2\tilde H,
 \end{array}\label{deltaphiofh}
\end{equation}
which for $f\in\cQ_2^4$ with $H(f)\in X_2^4$ yields an expression of the associated form of $H(f)$ via classical covariants.

Above we calculated the compositions $H\circ(\Delta\Phi)$ and $(\Delta\Phi)\circ H$ in (\ref{hofdeltaphi}), (\ref{deltaphiofh}) from formulas \eqref{covar1}, \eqref{eqn-pullback}, \eqref{eqn-involution}. On the other hand, identities (\ref{hofdeltaphi}), (\ref{deltaphiofh}) can be also derived analogously to the relations in \eqref{binary-pullbacks}, and one can then obtain \eqref{eqn-involution} from \eqref{covar1}, \eqref{eqn-pullback}, (\ref{hofdeltaphi}).  

\subsection{Binary quintics} \label{S:contra2}
Descriptions of the map $\Phi$ in terms of standard covariants can be also obtained for binary forms of certain degrees higher than 4, but the computations are more involved. Here we briefly sketch our calculations for the case of binary quintics, i.e.~for $n=2$, $d=5$. By Proposition \ref{moreprecise}, in this situation $\Delta \Phi$ is a contravariant of weight 18, degree 6 and class 6.

A generic binary quintic $f \in \cQ_2^5$ is linearly equivalent to a quintic given in the Sylvester canonical form
\begin{equation}
f = a X^5 + b Y^5 + c Z^5,\label{sylvcanform}
\end{equation}
where $X$, $Y$, $Z$ are linear forms satisfying $X+Y+Z=0$ (see, e.g.,~p.~272 in \cite{El}). The algebra of invariants of binary quintics is generated by invariants of degrees 4, 8, 12, 18 with a relation in degree 36, and the algebra of covariants is generated by 23 fundamental covariants (see \cite{Sy}), which we will write as $C_{i,j}$ where $i$ is the degree and $j$ is the order. 

For $f\in\cQ_2^5$ given in the form (\ref{sylvcanform}) the covariants relevant to our calculations are computed as follows:
$$
\begin{array}{l}
C_{4,0}(f)=a^2b^2+b^2c^2+a^2c^2-2abc(a+b+c), \\
\vspace{-0.3cm}\\
C_{8,0}(f)=a^2b^2c^2(ab+ac+bc),\\
\vspace{-0.3cm}\\
C_{5,1}(f)=abc(bcX+acY+abZ), \\
\vspace{-0.3cm}\\
C_{2,2}(f)=abXY+acXZ+bcYZ, \\
\vspace{-0.3cm}\\
C_{3,3}(f)=abcXYZ, \\
\vspace{-0.3cm}\\
C_{4,4}(f)=abc(aX^4+bY^4+cZ^4),\\
\vspace{-0.3cm}\\
C_{1,5}(f)=f =  a X^5 + b Y^5 + c Z^5,\\
\vspace{-0.3cm}\\
\displaystyle C_{2,6}(f)=\frac{H(f)}{400}=abX^3Y^3+bcY^3Z^3+acX^3Z^3.
\end{array}
$$
For instance, the discriminant can be written as
$$
\Delta=C_{4,0}^2-128\,C_{8,0}.
$$

The vector space of covariants of degree 6 and order 6 has dimension 4 and is generated by the products 
$$
\hbox{$C_{4,0}C_{2,6}$, $C_{1,5}C_{5,1}$, $C_{3,3}^2$, $C_{2,2}^3$, $C_{2,2}C_{4,4}$}
$$
satisfying the relation
$$
C_{4,0}C_{2,6} - C_{1,5}C_{5,1} + 9 C_{3,3}^2 - C_{2,2}^3 + 2 C_{2,2}C_{4,4}=0.
$$
One can then explicitly compute
$$
\widehat{\Delta \Phi}=\frac{1}{20}C_{4,0}C_{2,6}-\frac{3}{50}C_{1,5}C_{5,1}+\frac{27}{10}C_{3,3}^2-\frac{1}{10}C_{2,2}^3.
$$

\subsection{Ternary cubics} \label{S:contra3}
Let $n=3$, $d=3$. By Proposition \ref{moreprecise}, in this case $\Delta \Phi$ is a contravariant of weight 10, degree 9 and class 3. Recall that the algebra of invariants of ternary cubics is freely generated by the invariants ${\rm I}_4$, ${\rm I}_6$ defined in Subsection \ref{S:cubics}, and the ring of contravariants is generated over the algebra of invariants by the Pippian ${\rm P}$ of degree $3$ and class $3$, the Quippian ${\rm Q}$ of degree $5$ and class $3$, the Clebsch transfer of the discriminant of degree $4$ and class $6$, and the Hermite contravariant of degree $12$ and class $9$ (see \cite{C}, \cite{MT}). For a ternary cubic of the form \eqref{generaltercubic}, the Pippian and Quippian are calculated as follows:
$$
\hspace{-0.1cm}\begin{array}{l}
{\rm P}(f)(z^*)  = -d(bcz_1^{*3}+acz_2^{*3}+abz_3^{*3})-(abc-4d^3)z_1^*z_2^*z_3^*,\\
\vspace{-0.1cm}\\
{\rm Q}(f)(z^*) = (abc-10d^3)(bcz_1^{*3}+acz_2^{*3}+abz_3^{*3})-\\
\vspace{-0.3cm}\\
\hspace{8cm}6d^2(5abc+4d^3)z_1^*z_2^*z_3^*.
\end{array}
$$
Since any contravariant of degree 9 and class 3 is a linear combination of ${\rm I}_6 {\rm P}$ and ${\rm I}_4 {\rm Q}$, it is easy to compute
\begin{equation}
\Delta \Phi = -\frac{1}{36}{\rm I}_6 {\rm P} - \frac{1}{27}{\rm I}_4 {\rm Q}.\label{contravar3}
\end{equation}
The above expression can be verified directly by applying it to the cubics $c_t$ defined in (\ref{ct}) and using formulas (\ref{bfct}), (\ref{discrtercub}), (\ref{form11}). 

Identity \eqref{contravar3} provides an expression for $\Phi$, therefore ${\mathbb \Phi}$, in terms of ${\rm I}_4$, ${\rm I}_6$, ${\rm P}$ and ${\rm Q}$. Namely, on $\XX_3^3$ we have
\begin{equation}
{\mathbb \Phi} = -\frac{1}{36}{\rm I}_6 {\rm P} - \frac{1}{27}{\rm I}_4 {\rm Q},\label{newexpr1}
\end{equation}
where the right-hand side is regarded as a morphism $\XX_3^3\to\PP(\cQ_3^{3*})$. One might think that formula (\ref{newexpr1}) yields a continuation of ${\mathbb \Phi}$ beyond $\XX_3^3$. However, for $f=z_1z_2z_3$ the right-hand side of (\ref{newexpr1}) is zero, which illustrates the obstruction to extending the morphism ${\mathbb \Phi}$ to the orbit ${\rm O}_1=O(z_1z_2z_3)$ discussed in Subsection \ref{S:cubics}.

Thinking of $\Phi$ as a map from $X_3^3$ to $\cQ_3^3$, hence of $\Delta\Phi$ as a self-map of $\cQ_3^3$, analogously to formula (\ref{eqn-involution}) for binary quartics we obtain
\begin{equation} \label{eqn-involution2}
(\Delta \Phi) \circ (\Delta \Phi) = -\frac{{\rm I}_4^2 \Delta^6}{ 2^{21} 3^{30}}{\mathbf{id}},
\end{equation}
which agrees with the fact, established in Subsection \ref{S:cubics}, that the rational map ${\mathbb \Phi}$ is an involution.
Formula \eqref{eqn-involution2} can be verified either analogously to the relations in (\ref{formauxcubics}) or by using \eqref{contravar3} together with the expressions 
\begin{equation}
\begin{array}{ll}
\displaystyle{\rm I}_4\circ (\Delta \Phi) = -\frac{\Delta^3}{2^{12}3^{12}}, &\displaystyle {\rm I}_6\circ (\Delta \Phi) = -\frac{{\rm I}_6 \Delta^4}{2^{15} 3^{18}},\\
\vspace{-0.1cm}\\
\displaystyle{\rm P}\circ (\Delta \Phi) = \frac{H \Delta^2}{2^{10}3^{12}},&\displaystyle {\rm Q}\circ(\Delta \Phi) = - \frac{H\,{\rm I}_6\Delta^3}{2^{15} 3^{17}} - \frac{{\rm I}_4^2\Delta^3 {\mathbf{id}}}{2^9 3^{15}}.
\end{array}\label{identsauxl}
\end{equation}
The first two equations in (\ref{identsauxl}) follow from (\ref{formauxcubics}), and the remaining two can be derived in a similar way.

\end{document}